\documentclass{amsart} 
\usepackage{lmodern}
\usepackage[utf8]{inputenc}
\usepackage{latexcolors}  % Load xcolor first with all options
\usepackage{amsmath, amsthm, amssymb, hyperref}
\usepackage[margin=1in]{geometry}
\usepackage{enumerate}
\usepackage[shortlabels]{enumitem}
\usepackage{mathrsfs}
\usepackage{mathtools}
\usepackage{multicol}
\usepackage{tikz-cd}
\usepackage{subcaption}
\usetikzlibrary{backgrounds}
\usetikzlibrary{decorations.pathmorphing}
\usetikzlibrary{decorations.pathreplacing}
\usepackage{graphicx, float} 
\usepackage[capitalise, noabbrev]{cleveref}
\usepackage{venndiagram}
\usepackage{tikz-cd}
\usepackage[normalem]{ulem}
\usepackage{verbatim}
\usetikzlibrary{shapes.arrows,arrows.meta}
\usetikzlibrary{shapes.geometric, calc}
\usetikzlibrary{3d,arrows.meta,positioning}
\hypersetup{colorlinks, linkcolor=red, citecolor=red, urlcolor=red}
\usepackage[dvipsnames]{xcolor}
\usepackage{mathrsfs}

\newcommand{\st}{\colon}

\newcommand{\bv}{{\bf v}}
\newcommand{\bw}{{\bf w}}
\newcommand{\bx}{{\bf x}}
\newcommand{\by}{{\bf y}}
\newcommand{\bz}{{\bf z}}
\newcommand{\ba}{{\bf a}}
\newcommand{\bb}{{\bf b}}

\newcommand{\R}{\mathbb{R}}

\newcommand{\Z}{\mathbb{Z}}
\newcommand{\cF}{\mathcal{F}}

\newcommand{\cR}{\mathcal{R}}

\newcommand{\sC}{\mathscr{C}}

\newcommand{\qand}{\quad \mbox{and} \quad}
\newcommand{\qfor}{\quad \mbox{for} \quad}

\newtheorem{theorem}{Theorem}[section]

\newtheorem{proposition}[theorem]{Proposition}

\newtheorem{lemma}[theorem]{Lemma}
\theoremstyle{definition}

\newtheorem{example}[theorem]{Example}
\newtheorem{remark}[theorem]{Remark}
\newtheorem*{remark*}{Remark}
\newtheorem{question}[theorem]{Question}
\newtheorem{problem}[theorem]{Problem}

\DeclareMathOperator{\Bier}{Bier}
\DeclareMathOperator{\conv}{conv}
\DeclareMathOperator{\relint}{rel int}

\title{Polytopal Bier spheres and nonrealizable central symmetries}

\author[T. Holleben]{Thiago Holleben}
\address[T. Holleben]{Max Planck Institute for Mathematics in the Sciences, Leipzig, Germany}
\email{holleben@mis.mpg.de}

\author[Y. Yang]{Yirong Yang}
\address[Y. Yang]{School of Mathematics, Georgia Institute of Technology, Atlanta, GA, USA}
\email{yyang917@gatech.edu}

\begin{document}

\begin{abstract}
    Bier spheres arise as deleted joins of simplicial complexes with their combinatorial Alexander duals and form one of the largest known families of simplicial spheres. We study centrally symmetric Bier spheres and give a simple criterion for when they cannot arise as boundaries of centrally symmetric polytopes. From this, we obtain a large new family of simplicial polytopes with combinatorial automorphisms that cannot be realized geometrically. Prior to our construction, the Bokowski--Ewald--Kleinschmidt polytope was the only known simplicial example exhibiting these properties. By Smith theory, these polytopes have noncontractible realization spaces. Finally, we establish that every Bier sphere with at most $12$ vertices is polytopal.
\end{abstract}

\maketitle

\section{Introduction}

In 1910, in an attempt to compute every possible combinatorial type of $4$-polytopes, Br\"uckner~\cite{B1910} found a triangulated $3$-sphere that is not the boundary of any $4$-polytope. In other words, Br\"uckner found the first example of a~\emph{nonpolytopal} sphere. Later, in 1988, Kalai~\cite{K1988} showed that in fact, for $d \ge 5$, most triangulated $(d-1)$-spheres are not boundary complexes of $d$-polytopes. This result was then extended by Pfeifle and Ziegler~\cite{PZ2004} in 2004 to the case $d = 4$. For $d \leq 3$, the situation is completely different, as it is known from Steinitz's theorem (see~\cite[Chapter 4]{Z1995}) that every triangulated $2$-sphere is polytopal. However, although nonpolytopal spheres are asymptotically abundant, explicit examples are still difficult to produce.

In 1990, Villarreal~\cite{V1990} studied a well-known construction in graph theory from an algebraic perspective, showing a simple way of generating Cohen--Macaulay simplicial complexes. Independently, in 1992, Bier showed that Villarreal's construction can be used to produce triangulated spheres, which are nowadays called~\emph{Bier spheres}. These spheres arise as the \emph{deleted join} of a simplicial complex and its combinatorial Alexander dual. 
Since the input for this construction is an arbitrary simplicial complex, Bier spheres form a very large class of simplicial spheres. Indeed, in 2005 Bj\"orner et al.~\cite{bjorner2005bier} showed that there are asymptotically more Bier spheres than simplicial polytopes, and hence most Bier spheres must be nonpolytopal. We note, however, that to this day, there is no known example of a nonpolytopal Bier sphere. The best current result in this direction is due to Timotijevi\'c et al.~\cite{TimotijevicZivaljevicJevtic2025}, who showed that every Bier sphere up to $11$ vertices is polytopal. In addition, Bier spheres arising from threshold complexes are polytopal, and all Bier spheres admit star-shaped realizations \cite{JevticTimotijevicZivaljevic2021}.

As observed in ~\cite{bjorner2005bier}, when a simplicial complex is equal to its Alexander dual, its Bier sphere is \emph{centrally symmetric} (cs). If a cs simplicial sphere is combinatorially isomorphic to the boundary of a cs polytope, then we say it is \emph{cs-polytopal}. We are interested in determining the cs-polytopality of cs Bier spheres. Answers to this problem offer insights not only into the quest of finding a nonpolytopal Bier sphere, but also into some more classical problems in discrete geometry. For example, Mani \cite{Mani1971} proved that every $3$-polytope admits a realization where all combinatorial automorphisms can be realized geometrically. This led to the conjecture that the same property holds for higher-dimensional polytopes. The first and, to the best of our knowledge, only simplicial counterexample to this conjecture is the Bokowski--Ewald--Kleinschmidt (BEK) polytope~\cite{bokowski1984combinatorial}. As we discuss later, a nonrealizable symmetry also implies a noncontractible \emph{realization space}. Much as with nonpolytopal simplicial spheres, whose existence is evident asymptotically but for which explicit examples remain sporadic, there is a gap between general existence and explicit constructions of simplicial polytopes with noncontractible realization spaces. Indeed, despite various universality theorems (see for example \cite{M1988, RG1996,AP2017}), the only explicit simplicial examples are the BEK polytope and a few rather high ($61$, $374$, and $693$) dimensional polytopes (see \cite{Sturmfels1988Isotopy} and \cite[Section 6.2]{BS1989}). In this paper, we give new, accessible constructions that help fill this gap in the literature.

Our first main result, proved in~\cref{s:csnonpolytopal}, is an application of a centrally symmetric version of Gale diagrams from~\cite{mcmullen1968diagrams}. It shows that Bier spheres provide an easy way to construct cs simplicial spheres that admit no geometric cs realization.
\begin{theorem}\label{t:intro1}
    For every even $d \geq 6$ and $2 \leq k \leq d/2$, there exist centrally symmetric $k$-neighborly Bier $(d-2)$-spheres on $2d$ vertices that are not cs-polytopal.
\end{theorem}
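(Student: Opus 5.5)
The plan is to reduce cs-polytopality of a cs Bier sphere on $2d$ vertices to a threshold-type condition on the underlying self-dual complex, and then to exhibit self-dual complexes that are neighborly enough but violate that condition.

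\emph{Setup.} Let $K$ be a simplicial complex on $[d]$ that equals its Alexander dual, so that for every $A\subseteq[d]$ exactly one of $A$ and $[d]\setminus A$ lies in $K$. Then $\Bier(K)$ is a cs $(d-2)$-sphere on the $2d$ vertices $1,\dots,d,1',\dots,d'$, with antipodal map $i\leftrightarrow i'$. A cs realization would be a cs $(d-1)$-polytope $P$ with $2d=2((d-1)+1)$ vertices. By the cs Gale diagrams of McMullen~\cite{mcmullen1968diagrams}, such a $P$ has a cs Gale diagram in $\R^{d-(d-1)}=\R^1$. This diagram is a configuration $\pm x_1,\dots,\pm x_d$ of real numbers. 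After replacing some $x_i$ by $-x_i$, which amounts to relabelling $i\leftrightarrow i'$, we may assume all $x_i>0$.

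\emph{Step 1: read off the face lattice from the $1$-dimensional cs diagram.} The goal is to translate McMullen's coface criterion, namely $0\in\relint\conv$ of the images of the complement of a face, into a weighted-majority rule. Concretely, I expect to show that a set $\{i: i\in A\}\cup\{j' : j\notin A\}$ is a facet of $P$ exactly when $\sum_{i\in A}x_i \ne \sum_{j\notin A} x_j$, with the sign of the difference deciding the side. Simpliciality of $P$ is equivalent to genericity of the $x_i$, meaning that no signed sum vanishes. Matching this against the facets of $\Bier(K)$, which are the sets $A\cup (B)'$ with $A\in K$, $B\in K^\vee$ and $A\sqcup B=[d]$ minus one element, should yield the following criterion. $\Bier(K)$ is cs-polytopal if and only if, after flipping some coordinates, $K$ is a weighted majority (self-dual threshold) complex: there are generic weights $w_i>0$ with $A\in K$ iff $\sum_{i\in A}w_i<\tfrac12\sum_i w_i$. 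The converse direction should follow by reversing the construction, or from polytopality of threshold Bier spheres. Only the "only if" direction is needed here.

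\emph{Step 2: flips preserve non-thresholdness.} A cs combinatorial isomorphism may send $i$ to $j'$. Flipping $i$ replaces $K$ by a complex $K^{(i)}$, which is a "signed" threshold complex for the weight vector with $w_i$ negated. So the obstruction should be phrased as a flip-invariant property. The natural choice is failure of $2$-asummability: there exist $A,B\in K$ and $C,D\notin K$ with $\mathbf 1_A+\mathbf 1_B=\mathbf 1_C+\mathbf 1_D$. This condition is incompatible with any linear threshold rule, whatever the signs of the weights, since summing the two strict inequalities for $A,B$ and the two for $C,D$ gives a contradiction.

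\emph{Step 3: construction.} Fix even $d\ge 6$ and $2\le k\le d/2$. Bier $k$-neighborliness amounts to $K$ containing all subsets of size $<k$ and, by self-duality, excluding all subsets of size $>d-k$. Since $k\le d/2$, the only freedom is in sizes between $k$ and $d-k$, in particular in the middle layer $|A|=d/2$, where $A$ and its complement must be split. I plan to start from the majority complex, which contains all sets of size $<d/2$, and choose the middle layer non-thresholdly. Since $d/2\ge 3$, one can take disjoint pairs and use $A=\{1,2\}\cup S$, $B=\{3,4\}\cup S$, $C=\{1,3\}\cup S$, $D=\{2,4\}\cup S$ with $|S|=d/2-2$ and $S\cap\{1,2,3,4\}=\emptyset$. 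The requirement is to put $A,B\in K$ and $C,D\notin K$ while keeping $K$ a self-dual simplicial complex. Downward closure is automatic, because all proper subsets already lie in $K$. Self-duality requires checking that the complements of $A,B,C,D$ are not forced inconsistently; since these complements are distinct from $A,B,C,D$ (they contain $\{5,\dots\}$ elements outside $S$), we then assign the remaining middle pairs arbitrarily. Neighborliness holds for every $k\le d/2$.

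The main obstacle is Step 1: correctly deriving from McMullen's cs Gale diagram criterion that the face structure is exactly a signed weighted-majority rule, including the handling of coordinate flips and genericity. The construction itself should be routine.
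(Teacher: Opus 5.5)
Your overall strategy is the paper's: use the one-dimensional cs diagram to get weights, then derive contradictory inequalities. Your complex $K$ is also fine. It is in fact an instance of \cref{prop:noncspoly}: take $r=2$, $\ell=3$ and, in the proposition's notation, $C=\{1\}\cup S$, $D=\{4\}\cup S$. Then $C\sqcup\{r\}$ and $D\sqcup\{\ell\}$ are your $A,B\in K$, and $C\sqcup\{\ell\}$, $D\sqcup\{r\}$ are your $C,D\notin K$. The paper's hypothesis is exactly an exchange-type failure of $2$-asummability.

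The gap is in Steps~1--2, which you yourself flag as the main obstacle. What you wrote there is not correct.
\begin{itemize}
\item In Step~1, the set $\{i:i\in A\}\cup\{j':j\notin A\}$ has one vertex from every antipodal pair. By \cref{thm:csdiagram} with $r=n$, it is a face only if the full signed sum is $0$, which simpliciality forbids, so these sets are never facets. The correct translation, with $w(E)=\sum_{i\in E}w_i$, is: for disjoint $A,B$, the set $A\cup B'$ is a face iff $|w(A)-w(B)|<w([d]\setminus(A\cup B))$, i.e.\ iff $w(A),w(B)<\tfrac12 w([d])$. So in the relabelled coordinates $\partial P=\Bier(K_w)$, where $K_w=\{E:w(E)<\tfrac12 w([d])\}$.
\item In Step~2, flipping does not produce a ``signed threshold'' version of $K$. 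Relabelling $i\leftrightarrow i'$ generally does not even give the Bier sphere of a self-dual complex. For the hexagon $\Bier(\{\emptyset,\{1\},\{2\},\{3\}\})$, swapping $1\leftrightarrow 1'$ gives the non-self-dual $x$-side complex generated by $\{1,2\},\{1,3\}$.
\item Unwinding a flip of a set $F$ only yields $K=\{E:E\setminus F\in K_w,\ E\cap F\in K_w\}$. This is a join of two threshold complexes, which is not by itself a threshold description of $K$.
\end{itemize}
So, as written, the chain from a cs realization to your $2$-asummability contradiction does not close.

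The missing idea, which is exactly how the paper argues, is that flips are unnecessary. If $E\in K$, then every subset of $E$ lies in $K$. Hence $\{z_i:i\in E\}$ is a face of $\Bier(K)$ for \emph{every} choice $z_i\in\{x_i,y_i\}$. Choose $z_i$ with $\bar z_i\ge 0$ and put $w_i=|\bar x_i|$. Then \cref{thm:csdiagram} gives $w(E)<w([d]\setminus E)$; not all $w_i$ vanish, so zero entries are harmless. Thus $K\subseteq K_w$. Since $K$ contains one set of each complementary pair and $K_w$ contains at most one, $K=K_w$. Your $2$-asummability violation then gives the contradiction at once.

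Repaired this way, your route yields a cleaner general criterion than the paper states. Any non-threshold self-dual $K$ containing all pairs gives a non-cs-polytopal Bier sphere, and the paper's condition is one particular certificate of non-thresholdness.

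Two further points:
\begin{itemize}
\item You should record why any isomorphism $\Bier(K)\cong\partial P$ respects antipodes. Once $K$ contains all pairs, the nonedges are exactly the pairs $\{x_i,y_i\}$.
\item There is an off-by-one. By \cref{prop:csneighbor}, cs $k$-neighborliness needs all sets of size $\le k$ in $K$, not $<k$, so your complex covers only $2\le k\le d/2-1$. The value $k=d/2$ is impossible for every self-dual $K$, since it would place each $d/2$-set together with its complement in $K$. The paper's \cref{prop:noncspoly} likewise only assumes $k\le (n-2)/2$.
\end{itemize}
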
 
The precise definition of cs-neighborliness can be found in \cref{s:csnonpolytopal}. Roughly speaking, high neighborliness implies high density of the lower-dimensional skeleta of the spheres, imposing more rigid combinatorial structures. Our construction gives an explicit and easy-to-check combinatorial condition on a simplicial complex $\Delta$ that guarantees $\Bier(\Delta)$ to be non-cs-polytopal, see~\cref{prop:noncspoly}.

When a polytope $P$ admits a combinatorial symmetry $\phi$ of order $n$, we may apply the map $\phi$ to the vertex labels of $P$ in order to obtain new realizations of $P$. More concretely, a combinatorial symmetry of order $n$ induces an action of $\Z_n$ on the~\emph{realization space} $\cR(P)$ of $P$, which is the space of all realizations of $P$ modulo affine transformations. A fixed point of this $\Z_n$-action corresponds to a realization in which $\phi$ is represented by an affine symmetry. Therefore, a nonrealizable combinatorial symmetry is an obstruction to having fixed points. The topology of the space $\cR(P)$ was extensively studied by many, including Mn\"ev~\cite{M1988} and Richter-Gebert~\cite{RG1996}, who showed that $\cR(P)$ can have arbitrary homotopy type when $P$ has few vertices or when it is $4$-dimensional (see also~\cite{V2023}). Classical Smith theory relates the obstruction to fixed points to the topology of the space (see~\cref{l:topology,r:topology}). Combining this with our construction, we obtain several new cs-neighborly $d$-polytopes in dimension $d \geq 5$ with noncontractible realization spaces.

\begin{theorem}\label{t:intro2}
    There exist cs-neighborly Bier spheres that are polytopal but not cs-polytopal, and cs-neighborly $(d-1)$-spheres satisfying this property for all $d \ge 5$. In particular, their polytopal realizations have noncontractible realization spaces.
\end{theorem}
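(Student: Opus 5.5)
The plan splits the statement into three parts: a non-cs-polytopality obstruction, explicit polytopal realizations, and a Smith-theory conclusion about realization spaces.

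\textbf{Step 1: the obstruction.} I would take a self-dual complex $\Delta$ satisfying the combinatorial criterion of \cref{prop:noncspoly}. Then $\Bier(\Delta)$ is centrally symmetric and, by the construction underlying \cref{t:intro1}, not cs-polytopal. I would choose $\Delta$ so that $\Bier(\Delta)$ is also cs-neighborly. The natural first candidate is the smallest instance allowed by \cref{t:intro1}: $d=6$, giving a $4$-sphere on $12$ vertices, which is maximally cs-neighborly with $k=3=d/2$.

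\textbf{Step 2: polytopality.} I would show directly that this particular $\Bier(\Delta)$ is polytopal. I would write down explicit coordinates, found by computer search, for $12$ points in $\R^5$. I would then certify that the facets of their convex hull coincide with the facets of $\Bier(\Delta)$. Alternatively, since the sphere has $12=d+7$ vertices in dimension $5$, one could search for an (ordinary) Gale diagram in $\R^{6}$. The abstract's claim that all Bier spheres on at most $12$ vertices are polytopal would cover this case automatically, if that result is proved independently earlier. Either way, the certificate is a finite check, but finding it is the \textbf{main obstacle}: cs Gale-diagram obstructions rule out symmetric coordinates, so the search must be run over genuinely asymmetric point sets. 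I would seed the search with a perturbed symmetric configuration that realizes all but a few facets, and then repair the remaining ones by local moves.

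\textbf{Step 3: all $d\ge5$.} To get cs-neighborly spheres that are polytopal but not cs-polytopal in every dimension $d\ge 5$, I would use a dimension-raising operation that preserves all three properties. The candidate is the free join with the boundary of a cross-polytope, i.e.\ taking direct sums $P\oplus[-e,e]$ of polytopes, whose boundary is the suspension. Polytopality is preserved since $P\oplus I$ is a polytope. Cs-neighborliness is preserved in the appropriate range, but this needs checking: suspension adds an antipodal pair, and a $k$-cs-neighborly sphere stays $k$-cs-neighborly after suspension, though it may not remain maximally cs-neighborly. Non-cs-polytopality is preserved because a cs realization of the suspension would yield a cs realization of the link of the new apex, which is the original sphere. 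For the link argument I would use that the link of a vertex of a cs polytope is realized cs inside the vertex figure, combined with the cs Gale obstruction applied to the link. Separately, for $d=5$ the base example already suffices, and for even/odd $d$ one can start from the $d=6$ (or a $d=5$) example.

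\textbf{Step 4: realization spaces.} Let $P$ be any polytopal realization. The combinatorial central symmetry induces a $\Z_2$-action on $\cR(P)$, and a fixed point would give a realization in which the symmetry is affine, i.e.\ a cs realization, which Step 1 forbids. By Smith theory (\cref{l:topology,r:topology}), a $\Z_2$-action on a contractible (finite-dimensional, $\Z_2$-acyclic) space has a fixed point. Hence $\cR(P)$ is noncontractible.
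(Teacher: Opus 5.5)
Your overall route is the paper's: the $n=6$ complex of \cref{ex:n=6noncs} is not cs-polytopal by \cref{prop:noncspoly}, it is polytopal by the $12$-vertex computation (\cref{thm:all12vertBier}), higher dimensions come from two-point suspensions (\cref{prop:suspensionnoncs}), and noncontractibility comes from Smith theory (\cref{p:topology}).

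\textbf{The gap is in Step 3.} Your argument that suspension preserves non-cs-polytopality relies on the claim that the link of a vertex of a cs polytope is realized cs inside the vertex figure. This claim is false. The point reflection of $P'$ sends $\bv$ to $\bw=-\bv$, so it carries the vertex figure at $\bv$ to the one at $\bw$, not to itself.

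Concretely, cut with $\{\langle c,\by-\bv\rangle=1\}$ and put $\alpha_i=\langle c,\bx_i\rangle$, $\beta=\langle c,\bv\rangle<0$. The points on the edges from $\bv$ to $\pm\bx_i$ have midpoint $\bv+(\alpha_i\bx_i-\beta\bv)/(\alpha_i^2-\beta^2)$. In dimension at least $3$, this is independent of $i$ only if every $\alpha_i=0$. That means all $\pm\bx_i$ already lie in a hyperplane through the origin and form a cs realization of $S$, which is exactly what you wanted to prove.

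The paper's fix is to project $P'$ orthogonally along $\bv$. The projection is linear, so its image is cs. Every facet of $P'$ contains $\bv$ or $\bw$, and the facets through $\bv$ have outer normals with positive $\bv$-component. Hence the projection maps the boundary of the star of $\bv$, which is $S$, isomorphically onto the boundary of the image. In Gale-dual terms this is contracting $\bv$, i.e.\ deleting $\pm\bar{\bv}$ from the cs diagram.

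Alternatively, you can make your ``cs Gale obstruction'' remark rigorous by rerunning the proof of \cref{prop:noncspoly} on $S'$ itself, whose cs diagram still lies in $\R$. Choose $z_v\in\{v,w\}$ with $\bar{\bz}_v\ge 0$. The faces $\{z_i: i\in C\sqcup\{r\}\}\cup\{z_v\}$ and $\{z_i : i\in[n]\setminus(C\sqcup\{\ell\})\}\cup\{z_v\}$ give $\bar{\bz}_r<\bar{\bz}_\ell-\bar{\bz}_v\le\bar{\bz}_\ell$. Condition (2) gives the reverse inequality, a contradiction.

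\textbf{Three smaller points.}

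First, the $12$-vertex example is cs $2$-neighborly, not cs $3$-neighborly. By \cref{prop:csneighbor}, $k\le\lfloor(n-1)/2\rfloor=2$ for $n=6$, since $k=3$ would put both a $3$-set and its complement into $\Delta=\Delta^*$. It is still cs-neighborly, because it bounds a $5$-polytope and $\lfloor 5/2\rfloor=2$. (The bound $k\le d/2$ in \cref{t:intro1} should read $k\le(d-2)/2$, matching \cref{prop:noncspoly}.)

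Second, in Step 4 the passage from ``the symmetry is affine'' to ``a cs realization'' needs \cref{l:eigenvalues}. The affine map is an involution fixing the vertex barycenter, so its linear part has eigenvalues $\pm1$. A $+1$-eigenvector would yield a fixed boundary point, hence a face mapped to itself, contradicting freeness. So the map is a point reflection.

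Third, as you half-suspect, suspension keeps $k=2$, while cs-neighborliness of a $(d-1)$-sphere requires $k=\lfloor d/2\rfloor\ge 3$ once $d\ge 6$. No $12$-vertex Bier $4$-sphere is cs $3$-neighborly, so changing the base does not help. Step 3 therefore yields only cs $2$-neighborly examples for $d\ge6$. The paper's own derivation via \cref{prop:suspensionnoncs} has exactly this limitation. Genuinely cs-neighborly examples are available only for $d=5$ and for the sampled Bier spheres with $n\in\{8,\dots,18\}$, i.e.\ odd $d\le 17$.
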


\begin{remark}
    The second part of the statement follows from the fact that if a polytope $P$ admits a combinatorial central symmetry that is not realizable by any realization of $P$, then its realization space is not contractible. It has been mentioned several times in the literature (see for example \cite{BG1990, Z1993}) that Mn\"ev observed that nonrealizable combinatorial symmetries are obstructions to the contractibility of realization spaces, using Smith theory. However, we could not find either an explicit statement or a proof of it. We thus provide an exposition of the application of Smith theory to this problem in \cref{subsec:noncontractible}, and include one proof for the case of combinatorial central symmetry, see~\cref{p:topology}. 
\end{remark}

\cref{t:intro2} follows from the case when the spheres from \cref{t:intro1} are polytopal. We note that when deciding the polytopality of the spheres from \cref{t:intro1}, either outcome is significant: if such a sphere is polytopal, then it is a new example of a simplicial polytope with a noncontractible realization space; if it is nonpolytopal, then it will be the first explicit nonpolytopal Bier sphere. In trying to find examples of nonpolytopal Bier spheres, we obtain the following strengthening of the main result from~\cite{JevticTimotijevicZivaljevic2025}. 
\begin{theorem}\label{thm:all12vertBier}
    All Bier spheres with at most $12$ vertices are polytopal.
\end{theorem}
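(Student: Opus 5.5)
The plan is to reduce the statement to a finite enumeration and then certify each case by computer, building on the result of Timotijevi\'c et al.\ for at most $11$ vertices. A Bier sphere $\Bier(\Delta)$ for $\Delta \subsetneq 2^{[n]}$ has $2n$ vertices minus the ``ghost'' vertices, namely those $i$ with $\{i\}\notin\Delta$ or $[n]\setminus\{i\}\in\Delta$. By removing such indices one may assume $\Bier(\Delta)$ has exactly $2n$ vertices. This can be arranged up to a reduction: if $\{i\}\notin\Delta$, the sphere is a cone-free restriction isomorphic to $\Bier$ of a complex on $[n]\setminus\{i\}$. So, given the $11$-vertex result, it suffices to treat spheres with exactly $12$ vertices. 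These are either $\Bier(\Delta)$ with $n=6$ and no ghost vertices, or $n=7$ with one ghost vertex, and so on. I would first state the standard reduction lemma: $\Bier(\Delta)$ on a ground set with ghost vertices is isomorphic to a Bier sphere on a smaller ground set with all vertices present, via link/deletion of the ghost index. The count is therefore $n=6$, all $12$ vertices present, dimension $n-2=4$, so these are $4$-spheres (boundaries of $5$-polytope candidates).

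Next I will enumerate all simplicial complexes $\Delta$ on $[6]$ with all singletons present and no cofacet $[6]\setminus\{i\}$ present, up to the action of $S_6$ and the duality $\Delta\mapsto\Delta^\circ$ (which gives an isomorphic Bier sphere). The number of complexes on $6$ labeled vertices is in the millions, but after the symmetry reductions it is manageable. I would then further deduplicate by combinatorial isomorphism type of $\Bier(\Delta)$, e.g.\ via canonical forms of facet lists. Known polytopal subfamilies can be discarded: threshold complexes give polytopal spheres by \cite{JevticTimotijevicZivaljevic2021}. Operations known to preserve polytopality can also be used to discard cases, for instance when $\Delta$ is a cone or a join-type decomposition where $\Bier$ becomes a join or a stellar subdivision/bistellar flip of a smaller polytopal Bier sphere.

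For the remaining isomorphism classes, I will certify polytopality by explicit realization. The first attempt is random or Gale-diagram based search: a $12$-vertex $5$-polytope has a Gale dual in $\R^{6}$, but more practically one searches for vertex coordinates in $\R^5$, starting from realizations of neighbouring spheres or from the star-shaped realization of \cite{JevticTimotijevicZivaljevic2021} and perturbing to achieve convexity. Each candidate is verified exactly by checking, with rational coordinates, that every facet's hyperplane has all other vertices strictly on one side and that the resulting face lattice matches.

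The main obstacle is the handful of spheres for which naive search fails. For these I would try a constructive route: realize $\Bier(\Delta)$ as obtained from a polytopal Bier sphere on fewer vertices by a bistellar move or stellar subdivision. Then I would apply the fact that a stellar subdivision of a face of a polytope is polytopal (push the new vertex just beyond that face). Failing that, I would fall back to solving the chirotope/orientation constraints via a SAT or nonlinear solver to find coordinates.
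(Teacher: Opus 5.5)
\textbf{There is a genuine gap: your reduction to ghost-free complexes on $[6]$ is false, and it discards exactly the cases the paper must treat separately.}

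For $\Delta$ on $[n]$, $\Bier(\Delta)$ is always an $(n-2)$-sphere, with $x_j$ absent iff $\{j\}\notin\Delta$ and $y_j$ absent iff $[n]\setminus\{j\}\in\Delta$. A ghost-free Bier sphere of dimension $D$ has exactly $2(D+2)$ vertices. But $\Bier(\Delta)$ with $g\ge 1$ ghosts has dimension $D=n-2$ and only $2(D+2)-g$ vertices. So it is not isomorphic to any ghost-free Bier sphere, on a smaller ground set or otherwise.

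What is true is weaker. If $\{i\}\notin\Delta$ and $\Delta'$ is $\Delta$ viewed on $[n]\setminus\{i\}$, then $\Bier(\Delta)=w(\Delta')\cup\bigl(y_i*\Bier(\Delta')\bigr)$. So $\Bier(\Delta')$ is only the \emph{link} of $y_i$, and polytopality of a vertex link says nothing about the whole sphere. A small example: $\Delta=\{\emptyset,\{1\},\{2\}\}$ on $[3]$ gives a pentagon, which is not a ghost-free Bier sphere.

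Consequently, the $12$-vertex Bier spheres are not only the $4$-spheres with $n=6$. They also include the $(4+i)$-spheres coming from complexes on $[6+i]$ with exactly $2i$ ghost vertices, $1\le i\le 6$. Note that for $n=7$ you need two ghosts, not one; one ghost gives $13$ vertices. These spheres are covered neither by the $11$-vertex theorem nor by your $n=6$ enumeration.

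The paper enumerates them explicitly ($180,20,5,2,1,1$ classes up to symmetry) and realizes them as well. It seeds the optimization with the star-shaped fan and searches over sign choices for the unpaired rays. For $i\ge 4$ the sphere has at most $d+3$ vertices and classical results apply. For $i=1,2,3$, with $d+6$, $d+5$, $d+4$ vertices, genuine realizations are required. As written, your plan does not prove the theorem.

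For the ghost-free case your plan is essentially the paper's. The paper enumerates the $8072$ classes up to $S_6$ and Alexander duality. It then deforms the star-shaped realization of \cite{JevticTimotijevicZivaljevic2021} into a convex one, by minimizing a softplus penalty on the sign conditions of \cref{prop:objective}. Finally it certifies each output with exact rational arithmetic.

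One caution on your fallback: a general bistellar flip does not preserve polytopality. Only stellar subdivisions, or realizations you actually verify, give valid certificates.
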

This computational result was obtained using generative AI and independently verified by the authors in SageMath. For details on AI usage and the verification process, see \cref{s:ai}. As a consequence, all Bier spheres from \cref{t:intro1} when $d = 6$ are polytopal. The algorithm also sampled $2098$ polytopal cases for when $d > 6$. These spheres and their repeated suspensions give rise to examples for \cref{t:intro2}. To the best of our knowledge, this is the simplest and most general method of generating simplicial polytopes with nonrealizable combinatorial symmetries (and thus noncontractible realization spaces). Finally, in~\cref{s:futurework} we gather questions that follow from our work.

\section{Preliminaries}
\subsection{Simplicial spheres and polytopes} A~\emph{(abstract) simplicial complex} $\Delta$ on $[n]$ is a collection of subsets of $[n]$ closed under inclusion. Elements of $\Delta$ are called~\emph{faces}, and maximal faces are called~\emph{facets}. For a face $\sigma \in \Delta$, the~\emph{dimension} of $\sigma$ is $\dim \sigma = |\sigma| - 1$. We say $\Delta$ is~\emph{pure} if every facet has the same dimension. Faces of dimension $0$ are called \emph{vertices}, $1$-dimensional faces are called \emph{edges}, and if $\Delta$ is pure of dimension $d$, then the $(d-1)$-faces are called~\emph{ridges}. The~\emph{$f$-vector} of a $d$-dimensional complex $\Delta$ is the sequence of numbers $(f_{-1}, f_0, \dots, f_d)$, where $f_{i}$ is the number of $i$-faces of $\Delta$. A~\emph{geometric simplex} of dimension $d$ in $\R^m$ is the convex hull of a collection of $(d+1)$ affinely independent points in $\R^m$. A~\emph{geometric simplicial complex} is a collection $\Gamma$ of geometric simplices that is closed under inclusion such that if $\sigma_1, \sigma_2 \in \Gamma$, then $\sigma_1 \cap \sigma_2 \in \Gamma$. A geometric simplicial complex $\Gamma$ naturally defines an abstract simplicial complex $\Delta$ and, in this case, we say $\Gamma$ is a~\emph{geometric realization of $\Delta$}. Conversely, every abstract simplicial complex $\Delta$ has a geometric realization in $\R^m$ for some $m$. We write $|\Delta|$ to denote a geometric realization of $\Delta$, and we refer the reader to~\cite[Chapter 1]{M2003} for more details on geometric realizations. If the geometric realization of a simplicial complex $\Delta$ is homeomorphic to a $(d-1)$-dimensional sphere, we say $\Delta$ is a~\emph{simplicial $(d-1)$-sphere}. A simplicial complex $\Delta$ is~\emph{centrally symmetric} or~\emph{cs} if there exists an involution $\iota$ on the vertices of $\Delta$ such that $\iota(\sigma) := \{\iota(v) \st v \in \sigma\} \neq \sigma$ for every nonempty face $\sigma$ of $\Delta$ and $\iota(\iota(\sigma)) = \sigma$ for every $\sigma$. We call the face $\iota(\sigma)$ the~\emph{antipodal face} of $\sigma$. 

A~\emph{polytope} $P \subset \R^m$ is the convex hull of a finite set of points in $\R^m$. A~\emph{supporting hyperplane} of $P$ is a hyperplane $H = \{\bx \in \R^m  \st \ba^\top \bx = b\}$ with $\ba \in \R^m \setminus \{\mathbf{0}\}$ and $b \in \R$ such that $P \cap H \ne \emptyset$, and $P \subseteq \{\bx \in \R^m \st \ba^\top \bx \leq b\}$ or $P \subseteq \{\bx \in \R^m \st \ba^\top \bx \geq b\}$. A set $F \subset \R^m$ is a~\emph{face} of $P$ if there exists a supporting hyperplane $H$ such that $F = P \cap H$. By convention we also consider $\emptyset$ and $P$ to be faces of $P$. The dimension of a face is the dimension of its affine hull. A $d$-dimensional polytope $P$ is~\emph{simplicial} if every $(d-1)$-face is a geometric simplex. In this case, the boundary $\partial P$ of $P$ defines a simplicial $(d-1)$-sphere. If a simplicial sphere $\Delta$ arises as the boundary complex of a simplicial polytope $P$, then we say $\Delta$ is~\emph{polytopal}. The coordinates of the vertices of $P$ are called a \emph{convex realization} of $\Delta$. The first example of a nonpolytopal sphere was found by Br\"uckner~\cite{B1910}. In fact, it is well-known (see e.g.~\cite{K1988,PZ2004,NevoManyOddSpheres}) that most simplicial $(d-1)$-spheres (for $d \geq 4$) are not polytopal.

A set $X \subset \R^m$ is~\emph{centrally symmetric} or~\emph{cs} if $X = - X := \{-\bx \st \bx \in X\}$. In this paper, we focus on the case where $X$ is either a polytope, or the set of vertices of a polytope. Note that if $P$ is a simplicial cs polytope, then the boundary of $P$ defines a cs simplicial sphere. If $\Delta$ is a cs sphere that admits a cs convex realization, then we say $\Delta$ is~\emph{cs-polytopal}. The history and examples of polytopal cs spheres that are not cs-polytopal are the main topic of~\cref{s:csnonpolytopal}.

\subsection{Bier spheres} Given a simplicial complex $\Delta$ on vertex set $V$, the~\emph{Alexander dual} of $\Delta$ is the simplicial complex $\Delta^\ast = \{V \setminus \sigma \st \sigma \not \in \Delta\}$. In order to define Bier spheres, we will need two collections of vertices. Let $X_n = \{x_1, \dots, x_n\}$ and $Y_n = \{y_1, \dots, y_n\}$. If $\Delta$ is a simplicial complex on $X_n$, we denote by $\Delta^\ast_Y$ the Alexander dual of $\Delta$, where the faces are taken in $Y_n$ instead of $X_n$. In other words,  
$$
    \Delta^\ast_Y = \{Y_n \setminus \{y_i \st x_i \in \sigma\} \st \sigma \not \in \Delta\}.
$$ 
Moreover, for a face $\tau$ of $\Delta^\ast_Y$, we denote by $\tau_X$ the corresponding face in $\Delta^\ast$.

The~\emph{Bier sphere} of $\Delta$ is the simplicial sphere on vertex set $X_n \cup Y_n$ defined as follows: 
$$
    \Bier(\Delta) = \{\sigma \cup \tau \ \st \ \sigma \in \Delta \qand \tau \in \Delta^\ast_Y, \ \sigma \cap \tau_X = \emptyset\}.
$$
It is not clear from the definition that $\Bier(\Delta)$ is a simplicial sphere. The first proof of this fact is due to Bier in an unpublished manuscript from 1992 and is recorded in \cite[Section 5.6]{M2003}. We note that a very similar construction had already been studied two years earlier by Villarreal~\cite{V1990} in the context of commutative algebra, where it is shown that a related complex is Cohen--Macaulay. Villarreal's construction takes a simplicial complex $\Delta$ and outputs a simplicial complex $w(\Delta)$ which turns out to be a simplicial ball. The boundary of this ball is exactly the sphere $\Bier(\Delta)$. For more details on the history of these topics, we refer the reader to~\cite{CFHNVT2026}.

Since the input of the Bier operation is a simplicial complex, this class of spheres turns out to be very useful for studying interesting behaviors. Many basic properties of $\Bier(\Delta)$ can be read off from $\Delta$ directly. As an example, it is possible to characterize minimal nonfaces of Bier spheres from the minimal nonfaces of $\Delta$ and $\Delta^\ast_Y$. Concretely, $\Bier(\Delta)$ has three types of minimal nonfaces:
\begin{enumerate}
    \item nonfaces of $\Delta$ on the vertex set $X_n$
    \item the missing edges coming from antipodal points: $\{x_i, y_i\}$
    \item nonfaces of $\Delta^\ast_Y$ on the vertex set $Y_n$.
\end{enumerate}
This perspective is often very helpful, and it is used frequently in more algebraic settings (see for example~\cite[Section 3]{M2011}). We can also use the list of nonfaces above to prove that a certain sphere is not the Bier sphere of any simplicial complex. 

Finally, we note that our definition of simplicial complexes does not require that every singleton is a face. If $\Delta$ is a simplicial complex on vertex set $V$ and $v \in V$ is not a face of $\Delta$, we say $v$ is a~\emph{ghost vertex} of $\Delta$. Ghost vertices turn out to play an important role in the theory of Bier spheres, as most of the results about the polytopality of these spheres revolve around the number of vertices. The sphere $\Bier(\Delta)$ has a ghost vertex for every ghost vertex of $\Delta$ and $\Delta^\ast_Y$. Equivalently, $\Bier(\Delta)$ has a ghost vertex for every minimal nonface of size $1$ or $n-1$ of $\Delta$.

\section{Nonrealizable central symmetries}\label{s:csnonpolytopal}
In this section, we present a class of cs Bier spheres that are not cs-polytopal. The motivation for finding such examples is twofold:

\begin{enumerate}
    \item if such spheres are also not polytopal, then they will be the first known examples of nonpolytopal Bier spheres;
    \item if they turn out to be polytopal, then they will be examples of polytopes with combinatorial symmetries that cannot be realized geometrically.
\end{enumerate}
In~\cref{sec:all12vert} we will discuss how to compute the explicit coordinates of the convex realizations when we land in the second scenario.
\subsection{cs Bier spheres}
It follows from the definition that $\Bier(\Delta)$ is cs when $\Delta = \Delta^*$. This is also observed in \cite{bjorner2005bier} with the following equivalent formulation.
\begin{proposition}\cite[Proposition 6.1]{bjorner2005bier}
    Let $\Delta$ be a simplicial complex on $[n]$. If $A \in \Delta \iff [n] \setminus A \notin \Delta$, then $\Bier(\Delta)$ is centrally symmetric.
\end{proposition}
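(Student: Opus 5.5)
We will exhibit an explicit involution $\iota$ on $X_n \cup Y_n$ and check the two conditions in the definition of a centrally symmetric complex. The natural choice is the swap $\iota(x_i) = y_i$, $\iota(y_i) = x_i$. It is clearly an involution on vertices, so $\iota(\iota(\sigma)) = \sigma$ for every $\sigma$. We must still show two things: that $\iota$ maps faces of $\Bier(\Delta)$ to faces, so that it is a simplicial automorphism; and that $\iota(F) \neq F$ for every nonempty face $F$.

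First we reinterpret the hypothesis. By definition, $\Delta^\ast = \{[n]\setminus A \st A \notin \Delta\}$. The hypothesis $A \in \Delta \iff [n]\setminus A \notin \Delta$ gives, for any $B \subseteq [n]$: $B \in \Delta^\ast$ iff $[n]\setminus B \notin \Delta$ iff $B \in \Delta$. Hence $\Delta = \Delta^\ast$. Equivalently, for $\tau \subseteq Y_n$ we have $\tau \in \Delta^\ast_Y$ iff $\tau_X \in \Delta$.

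Now take a face $F = \sigma \cup \tau$ with $\sigma \in \Delta$, $\tau \in \Delta^\ast_Y$ and $\sigma \cap \tau_X = \emptyset$. Then $\iota(F) = \iota(\tau) \cup \iota(\sigma)$, where $\iota(\tau) = \tau_X \subseteq X_n$ and $\iota(\sigma) \subseteq Y_n$ is the $Y$-copy of $\sigma$. By the previous paragraph, $\tau_X \in \Delta$ because $\tau \in \Delta^\ast_Y$. Also $\iota(\sigma) \in \Delta^\ast_Y$, since its $X$-version is $\sigma \in \Delta = \Delta^\ast$. The disjointness condition $\tau_X \cap (\iota(\sigma))_X = \tau_X \cap \sigma = \emptyset$ is inherited. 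So $\iota(F) \in \Bier(\Delta)$, and $\iota$ is an automorphism.

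Finally, suppose $F \neq \emptyset$ and $\iota(F) = F$. Pick a vertex of $F$; by symmetry we may take $x_i \in F$. Then $y_i = \iota(x_i) \in F$, so $x_i \in \sigma \cap \tau_X$, contradicting disjointness. Equivalently, $\{x_i,y_i\}$ is one of the minimal nonfaces listed earlier. Thus $\iota(F)\neq F$, and $\Bier(\Delta)$ is centrally symmetric.

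The only real content is identifying $\Delta = \Delta^\ast$ from the hypothesis. The rest is bookkeeping with the $X$/$Y$ identification, and no step should be a genuine obstacle.
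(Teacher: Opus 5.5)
Your proof is correct and takes essentially the same route as the paper. The paper simply remarks that the hypothesis amounts to $\Delta = \Delta^\ast$ and that central symmetry of $\Bier(\Delta)$ under the swap $x_i \leftrightarrow y_i$ then follows directly from the definition; you carry out exactly this verification, including the closure of faces under the swap and the freeness condition.
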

A simplicial complex is \emph{$k$-neighborly} if every $k$-subset of its vertices forms a face. Since no pair of antipodal vertices in a cs complex is connected, the definition of neighborliness for cs complexes is modified as follows. A cs simplicial complex is \emph{cs $k$-neighborly} if every antipode-free set of $k$ vertices forms a face. Note that while an arbitrary cs complex can have non-unique antipodal pairings, the pairings for a cs $k$-neighborly complex when $k \ge 2$ are uniquely determined because they correspond exactly to the nonedges. The following result characterizes all complexes that give rise to cs $k$-neighborly Bier spheres.
\begin{proposition}\cite[Proposition 6.2]{bjorner2005bier}\label{prop:csneighbor}
Let $\Delta$ be a simplicial complex on $[n]$. Let $1 \le k \le \lfloor (n-1)/2 \rfloor$. Then $\Bier(\Delta)$ is a cs $k$-neighborly $(n-2)$-sphere with $2n$ vertices if and only if
\begin{enumerate}
    \item $A \in \Delta \iff [n]\setminus A \notin \Delta$, for all $A \subseteq [n]$,
    \item $B \in \Delta$ for all $B \subseteq [n]$ such that $|B| \le k$ (and thus $C \notin \Delta$ for all $C \subseteq [n]$ such that $|C| \ge n-k$).
\end{enumerate}
\end{proposition}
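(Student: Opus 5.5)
We prove the two directions separately, working throughout with the explicit list of minimal nonfaces of $\Bier(\Delta)$ recorded in the preliminaries. Since $\Bier(\Delta)$ is always an $(n-2)$-sphere on the vertex set $X_n\cup Y_n$, the statement reduces to two purely combinatorial questions. First, is $\Bier(\Delta)$ centrally symmetric with all $2n$ vertices present, with the antipodal map $x_i\leftrightarrow y_i$? Second, is every antipode-free $k$-subset of $X_n\cup Y_n$ a face?

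For the "if" direction, assume (1) and (2).
\begin{enumerate}
\item Condition (1) says exactly that $\Delta=\Delta^\ast$ after relabelling. So the map $\iota\colon x_i\leftrightarrow y_i$ sends $\Delta$ onto $\Delta^\ast_Y$.
\item $\iota$ preserves the condition $\sigma\cap\tau_X=\emptyset$, so it is an automorphism of $\Bier(\Delta)$.
\item No nonempty face is fixed by $\iota$, since $\{x_i,y_i\}$ is never a face. Hence $\Bier(\Delta)$ is cs; this is just the preceding Proposition.
\item Now take an antipode-free set $S=\sigma\cup\tau$ of size $k$, with $\sigma\subseteq X_n$ and $\tau\subseteq Y_n$. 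Then $|\sigma|\le k$, so $\sigma\in\Delta$ by (2).
\item Likewise $\tau_X$ has size at most $k$, so $\tau_X\in\Delta$, and by symmetry $\tau\in\Delta^\ast_Y$.
\item Antipode-freeness gives $\sigma\cap\tau_X=\emptyset$, so $S\in\Bier(\Delta)$.
\end{enumerate}
Taking $k\ge1$ also shows there are no ghost vertices, so the vertex count is $2n$.

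For the "only if" direction, assume $\Bier(\Delta)$ is a cs $k$-neighborly sphere on $2n$ vertices.
\begin{enumerate}
\item Since every $x_i,y_i$ is a vertex, every antipode-free subset $B\subseteq X_n$ with $|B|\le k$ is a face of $\Bier(\Delta)$. Faces contained in $X_n$ are exactly faces of $\Delta$, so $B\in\Delta$.
\item The same argument in $Y_n$ gives $B\in\Delta^\ast$, i.e.\ $[n]\setminus B\notin\Delta$ for all $|B|\le k$. This is (2) together with its parenthetical consequence.
\item For (1), first identify the antipodal map. When $k\ge2$ it is forced to pair the nonedges, and these are exactly $\{x_i,y_i\}$: all other pairs are faces by the previous steps, since $k\ge 2$ puts all pairs into $\Delta$ and $\Delta^*$.
\item When $k=1$ we must instead argue that the cs involution can be taken to be $x_i\leftrightarrow y_i$. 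This is the main obstacle, since the involution is not a priori unique.
\item Once $\iota$ is identified, it maps $\Bier(\Delta)$ to itself. Restricting to faces inside $X_n$ gives $\iota(\Delta)=\Delta^\ast_Y$, i.e.\ $\Delta=\Delta^\ast$, which is (1).
\end{enumerate}

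The main risk is therefore the identification of the involution for $k=1$. I would handle it as follows. Either the intended meaning of "cs" here is with respect to the canonical pairing, as in Bj\"orner et al.; or one uses that a cs $(n-2)$-sphere on $2n$ vertices is a subcomplex of the cross-polytope boundary for its pairing. In the latter case the nonedges of $\Bier(\Delta)$ contain $\{x_i,y_i\}$, and I would check that any other pairing leads to a contradiction with the sphere being a flag-free-of-$\{x_i,y_i\}$ deleted join.

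If that fails, I would simply cite \cite[Proposition 6.2]{bjorner2005bier}, whose convention fixes the pairing.
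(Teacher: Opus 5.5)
The paper gives no proof of this proposition; it quotes it from \cite{bjorner2005bier}. Your ``if'' direction is correct and complete. The gap is in the ``only if'' direction, where everything depends on knowing that the antipodal map is $x_i\leftrightarrow y_i$.

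\textbf{The $k\ge 2$ case is circular as ordered.} Steps 1--2 already assume the pairing: a subset $B\subseteq X_n$ is antipode-free only if $\iota$ never pairs two $x$'s. Step 3 then justifies the pairing by appealing to Steps 1--2. This is easily repaired by reordering. Cs $k$-neighborliness with $k\ge2$ makes every nonedge an antipodal pair. Since $\{x_i,y_i\}$ is a nonedge between two genuine vertices, $\iota(x_i)=y_i$. Only after that should you run Steps 1, 2 and 5.

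\textbf{The $k=1$ case has no argument.} The ``flag-free deleted join'' remark is not a proof. Falling back on a convention changes the statement, because the paper defines cs as the existence of \emph{some} free involution. A priori $\iota$ could pair $x_a$ with $x_b$ whenever $\{a,b\}\notin\Delta$, and $k=1$ permits such nonedges.

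\textbf{The missing step is short and works for every $k\ge1$.} Let $\iota$ be any free involution of $\Bier(\Delta)$.
\begin{enumerate}
\item Its orbits are nonedges. Since there are no ghost vertices, $\{x_i,y_j\}$ is an edge for all $i\neq j$. So an orbit meeting both $X_n$ and $Y_n$ must be some $\{x_i,y_i\}$.
\item Suppose $\iota(x_a)=x_b$ with $a\neq b$; then $\{a,b\}\notin\Delta$. We cannot have $\iota(y_a)=x_a$, so $\iota(y_a)\in Y_n$. Applying $\iota$ to the nonedge $\{x_a,y_a\}$ forces $\iota(y_a)=y_b$.
\item Since $\{y_a,y_b\}$ is a nonedge, $\{a,b\}\notin\Delta^*$, i.e.\ $P:=[n]\setminus\{a,b\}\in\Delta$.
\item A second orbit $\{x_c,x_d\}$ would give $\{c,d\}\subseteq P\in\Delta$, contradicting $\{c,d\}\notin\Delta$. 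Hence $\iota(x_p)=y_p$ for all $p\in P$.
\item The set $\{x_a\}\cup\{y_p\colon p\in P\}$ is a face, because $P\in\Delta^*$ if and only if $\{a,b\}\notin\Delta$. Its image $\{x_b\}\cup\{x_p\colon p\in P\}$ is a face only if $[n]\setminus\{a\}\in\Delta$.
\item But $[n]\setminus\{a\}\in\Delta$ means $\{a\}\notin\Delta^*$, making $y_a$ a ghost vertex, which is a contradiction.
\end{enumerate}
So no $x$ is paired with another $x$, and $\iota(x_i)=y_i$ for every $i$. This justifies Steps 1--2 for all $k\ge1$, and your Steps 1, 2 and 5 then finish the proof.
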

A cs $(d-1)$-sphere or a cs $d$-polytope is said to be \emph{cs-neighborly} if it is cs $\lfloor d/2\rfloor$-neighborly. Spheres and polytopes that are cs-neighborly are of considerable interest in discrete geometry. They have the densest possible low-dimensional skeleta compatible with central symmetry, and this rigid structure arises naturally in face enumeration and realization problems. We recommend~\cite{Novik2019CentrallySymmetric} for further background. By \cref{prop:csneighbor}, for odd $n$ there is only one cs-neighborly Bier $(n-2)$-sphere with $2n$ vertices, with $\Delta = \{A \subseteq [n]\colon |A| \le \lfloor n/2 \rfloor\}$. On the other hand, for even $n$, one can take any $\Delta$ that contains all sets of size less than $n/2$, and exactly one set from each pair $(A, [n]\setminus A)$ where $A \subseteq [n]$ such that $|A| = n/2$.

\subsection{cs diagrams and non-cs-polytopality}

To construct cs Bier spheres that are not cs-polytopal, we use the notion of cs diagrams introduced in \cite{mcmullen1968diagrams}. These diagrams were developed as an analogous machinery to the Gale diagram representing a convex polytope. Specifically, if $P$ is a cs $d$-polytope with $2n$ vertices, then its faces can be read off from a diagram consisting of a cs set of $2n$ points in $\R^{n-d}$. Therefore, if our cs Bier $(n-2)$-spheres can be realized as cs $(n-1)$-polytopes, then $d = n-1$ and their cs diagrams will contain points in $\R$, which are much simpler to study than the polytopes themselves.

We closely follow \cite{mcmullen1968diagrams} for the definitions and theorems about cs diagrams. Let $X = \{\pm \bx_1, \dots, \pm \bx_n\}$ be a cs set of points in $\R^d$. Let $X_+ = \{\bx_1, \dots, \bx_n\}$. Assume without loss of generality that $X$ spans $\R^d$. Let $L(X_+)$ be the set of $n$-vectors 
\[
\{(\lambda_1, \dots, \lambda_n): \lambda_1\bx_1 + \cdots \lambda_n\bx_n = 0\}.
\]
Observe that $L(X_+)$ is a vector space of dimension $n-d$. Let $\ba_j = (a_{j1}, \dots, a_{jn})$, $j \in [n-d]$ be a basis of $L(X_+)$. Let 
\[
\bar{\bx}_i = (a_{1i}, \dots, a_{n-d, i}) \qfor i \in [n]. 
\]
Then $\overline{X}:= \{\pm\bar{\bx}_1, \dots, \pm\bar{\bx}_n\} \subset \R^{n-d}$ will be called a \emph{cs transform} of $X$. The cs transform is defined uniquely up to linear equivalence. Suppose further that $X$ is the set of vertices of a cs $d$-polytope $P$. Any cs set of $2n$ points in $\R^{n-d}$ combinatorially isomorphic to a cs transform of $X$ is called a \emph{cs diagram} of $P$. 

The next theorem uses the same notation as the previous paragraph. 
\begin{theorem}[{\cite{mcmullen1968diagrams}}]\label{thm:csdiagram}
    Let $\varepsilon_1 \bx_{i_1}, \dots, \varepsilon_r \bx_{i_r} (\varepsilon_\ell = \pm 1)$ be any $r$ points of $X$ with distinct suffixes. Then $\conv \{\varepsilon_1 \bx_{i_1}, \dots, \varepsilon_r \bx_{i_r}\}$ is a face of $P$ if and only if
    \[
    \varepsilon_1 \bar{\bx}_{i_1} + \cdots + \varepsilon_r \bar{\bx}_{i_r} \in \relint \conv\{\delta_1\bar{\bx}_{j_1}+ \cdots + \delta_{n-r} \bar{\bx}_{j_{n-r}} \st \delta_1, \dots, \delta_{n-r} \in \{\pm 1\}\},
    \]
    where $\{j_1, \dots, j_{n-r}\} = [n] \setminus \{i_1, \dots, i_r\}$, taking the right-hand side expression to be $\{0\}$ if $r = n$. 
\end{theorem}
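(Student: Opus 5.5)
This is McMullen's cs Gale-diagram theorem. I would prove it by the standard Gale-duality argument, adapted to the central symmetry.

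\textbf{Step 1: reduce face membership to a linear condition.} Since $P$ is cs and the chosen points have distinct suffixes, a set $S=\{\varepsilon_\ell\bx_{i_\ell}\}$ spans a face exactly when there is a linear functional $\varphi$ with three properties:
\begin{itemize}
\item $\varphi(\varepsilon_\ell\bx_{i_\ell})=1$ for all $\ell$;
\item $|\varphi(\bx_j)|<1$ for every $j\notin\{i_1,\dots,i_r\}$;
\item the vertices not in $S$ lie strictly below level $1$.
\end{itemize}
Here the supporting hyperplane can be taken affine, of the form $\varphi=1$, because $0\in\operatorname{int}P$. The antipodes $-\varepsilon_\ell\bx_{i_\ell}$ automatically get value $-1$.

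\textbf{Step 2: dualize with a Farkas/Gordan-type alternative.} Nonexistence of such a $\varphi$ should be equivalent to the existence of a linear dependence among the points of $X$ of a certain sign pattern. I would phrase it positively via the cs structure: $S$ is a face iff $\sum_\ell \varepsilon_\ell \bx_{i_\ell}$ lies in the relative interior of the cone condition. Concretely, the point $\sum \varepsilon_\ell\bx_{i_\ell}/r$ must lie in the relative interior of a face, which pairs with the complementary indices through the dependence space $L(X_+)$.

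The key translation is as follows. The vectors $\lambda\in L(X_+)$ are exactly the values $(\langle \bar\bx_1,u\rangle,\dots,\langle\bar\bx_n,u\rangle)$ for $u\in\R^{n-d}$. The functionals $\varphi$ correspond to vectors in the row space, i.e. the orthogonal complement of $L(X_+)$. So a vector $c=(\varphi(\bx_1),\dots,\varphi(\bx_n))$ with $c_{i_\ell}=\varepsilon_\ell$ and $|c_j|<1$ elsewhere exists iff
\[
\sum_\ell \varepsilon_\ell\bar\bx_{i_\ell}+\sum_{j\notin I} c_j\bar\bx_j=0
\]
has a solution with $|c_j|<1$. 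This is the standard orthogonality of the two spaces: $c\perp L(X_+)$ iff $\sum c_i\bar\bx_i=0$.

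\textbf{Step 3: identify the resulting set.} The set $\{\sum_{j\notin I}c_j\bar\bx_j : |c_j|<1\}$ is the relative interior of the zonotope $\sum_{j\notin I}[-\bar\bx_j,\bar\bx_j]$. This zonotope equals $\conv\{\sum\delta_j\bar\bx_{j}\}$, and the image of the open cube is the relative interior of the image of the closed cube. Since this set is symmetric, the condition becomes $\sum\varepsilon_\ell\bar\bx_{i_\ell}\in\relint\conv\{\dots\}$. When $r=n$ the set is $\{0\}$, which is consistent.

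\textbf{Main obstacle.} The delicate step is Step 1, namely justifying that faces of a cs polytope are exactly the sets cut out by a functional with value $1$ on $S$ and $|\varphi|<1$ on the other vertices. This requires that a face contains no antipodal pair, which holds because $0$ is interior. It also requires that the remaining vertices $\pm\bx_j$ with $j\in I$ but opposite sign get value $-1<1$. I would also check the degenerate cases where the face condition uses nonstrict inequalities.
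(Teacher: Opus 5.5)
The paper does not prove this statement. It quotes it from McMullen--Shephard \cite{mcmullen1968diagrams} and uses it as a black box, so there is no in-paper argument to compare yours with. Your proposal is nonetheless a correct, self-contained proof along the standard Gale-duality route:
Step~1 reduces ``$\conv S$ is a face'' to the existence of a linear $\varphi$ with $\varphi=1$ on $S$ and $|\varphi(\bx_j)|<1$ for $j\notin I$.
Step~2 identifies the vectors $(\varphi(\bx_1),\dots,\varphi(\bx_n))$ with $L(X_+)^\perp=\{c\st \sum_i c_i\bar{\bx}_i=0\}$.
Step~3 identifies the image of the open cube $(-1,1)^{n-r}$ with the relative interior of the zonotope.

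Some tidying is in order.

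\textbf{The opening paragraph of Step~2.} The Farkas/Gordan alternative, ``the relative interior of the cone condition'', and the barycenter $\sum_\ell\varepsilon_\ell\bx_{i_\ell}/r$ do not form a meaningful statement as written, and none of it is used. No theorem of the alternative is needed: your ``key translation'' follows directly from $\operatorname{im}(M^\top)=\ker(M)^\perp$ for $M=[\bx_1\ \cdots\ \bx_n]$. Delete that paragraph.

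\textbf{Your ``main obstacle''.} This step is routine and slightly misdiagnosed. Distinct suffixes already exclude antipodal pairs from $S$. What $0\in\operatorname{int}P$ buys you (immediate from $P=-P$ being $d$-dimensional) is only that a supporting hyperplane misses the origin, so it can be normalized to $\varphi=1$. The sentence that actually deserves to be written out is this: since every point of $X$ is a vertex of $P$, $P\cap H=\conv S$ holds if and only if $X\cap H=S$. That is what produces the strict inequalities on $X\setminus S$, after which the antipodes $-\varepsilon_\ell\bx_{i_\ell}$ take value $-1<1$ automatically.

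\textbf{Step~3.} State explicitly the fact that linear maps commute with relative interiors of convex sets (Rockafellar, \emph{Convex Analysis}, Thm.~6.6). Also note that the central symmetry of the zonotope absorbs the sign in $\sum_\ell\varepsilon_\ell\bar{\bx}_{i_\ell}=-\sum_{j\notin I}c_j\bar{\bx}_j$.
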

We obtain the following result using this criterion.
\begin{proposition}\label{prop:noncspoly}
    Let $n$ be even and $\Delta$ be a simplicial complex on $[n]$ such that $\Bier(\Delta)$ is a cs $k$-neighborly sphere for some $2 \le k \le (n-2)/2$. Moreover, suppose there exist distinct $r, \ell \in [n]$ and $C, D \in {[n]\setminus \{r, \ell\} \choose (n-2)/2}$ such that 
\begin{enumerate}
    \item $C \sqcup \{r\} \in \Delta = \Delta^* \qand [n] \setminus (C \sqcup \{\ell\}) \in \Delta = \Delta^*$
    \item $D \sqcup \{\ell\} \in \Delta = \Delta^* \qand [n] \setminus (D \sqcup \{r\}) \in \Delta = \Delta^*$
\end{enumerate}
then $\Bier(\Delta)$ is not cs-polytopal. 
\end{proposition}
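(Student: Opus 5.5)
The plan is to argue by contradiction using \cref{thm:csdiagram} in the simplest possible setting, where the cs diagram is one-dimensional. Suppose $\Bier(\Delta)$ is realized as the boundary of a cs polytope $P$. Since $\Bier(\Delta)$ is an $(n-2)$-sphere, $P$ is an $(n-1)$-polytope with $2n$ vertices. Because $k\ge 2$, the antipodal pairing is forced: it is given by the nonedges $\{x_i,y_i\}$. So the vertices of $P$ can be written $\pm\bx_1,\dots,\pm\bx_n$, with $\bx_i$ realizing $x_i$ and $-\bx_i$ realizing $y_i$. These vertices span $\R^{n-1}$, so $L(X_+)$ is one-dimensional and the cs transform consists of real numbers $\pm a_1,\dots,\pm a_n$, not all zero.

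Next I translate faces into sign patterns. A set of $n-1$ vertices with distinct suffixes, taken with sign $+$ on $S$ and $-$ on $T$ (where $S\sqcup T=[n]\setminus\{j\}$), corresponds to $\{x_i\}_{i\in S}\cup\{y_i\}_{i\in T}$. By the definition of $\Bier(\Delta)$ and $\Delta=\Delta^*$, this is a facet exactly when $S\in\Delta$ and $T\in\Delta$. By \cref{thm:csdiagram} with $r=n-1$, it is a face of $P$ if and only if
\[
\sum_{i\in S}a_i-\sum_{i\in T}a_i\in \relint\conv\{a_j,-a_j\}.
\]
This condition means $|\sum_S a_i-\sum_T a_i|<|a_j|$ when $a_j\neq 0$, and means the sum equals $0$ when $a_j=0$.

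Now I use hypothesis (1). Put $E=[n]\setminus(C\sqcup\{r,\ell\})$, which has size $(n-2)/2$. Then $[n]\setminus(C\sqcup\{\ell\})=E\sqcup\{r\}$. Both $C\sqcup\{r\}$ and $E\sqcup\{r\}$ lie in $\Delta$, and so do their subsets $C$ and $E$. This gives two facets missing the suffix $\ell$: one with $(S,T)=(C\sqcup\{r\},E)$ and one with $(S,T)=(C,E\sqcup\{r\})$. Setting $s=\sum_C a_i-\sum_E a_i$, the two face conditions read $|s+a_r|<|a_\ell|$ and $|s-a_r|<|a_\ell|$, at least when $a_\ell\ne0$. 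Since $\max(|s+a_r|,|s-a_r|)\ge|a_r|$, these give $|a_r|<|a_\ell|$. Symmetrically, hypothesis (2) with $D$ and $E'=[n]\setminus(D\sqcup\{r,\ell\})$ gives $|a_\ell|<|a_r|$ when $a_r\neq0$. When both $a_r$ and $a_\ell$ are nonzero, these two strict inequalities contradict each other.

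The main obstacle is the degenerate case where $a_\ell=0$ or $a_r=0$. There the face condition degenerates to an equality, and the two equalities are consistent with each other. I would handle this geometrically rather than through the diagram. If $a_\ell=0$, then every linear dependence among the $\bx_i$ has zero $\ell$-coefficient. Hence $\bx_\ell$ is not in the span of the other $\bx_i$, and $P$ is a bipyramid with apexes $\pm\bx_\ell$ over the cs polytope $\conv\{\pm\bx_i:i\ne\ell\}$. In a bipyramid, every facet contains one of the two apexes. This contradicts the existence of the facet $\{x_i\}_{i\in C\sqcup\{r\}}\cup\{y_i\}_{i\in E}$, which avoids both $x_\ell$ and $y_\ell$. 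The case $a_r=0$ is handled identically using the facets from (2), which miss $r$. In both cases $P$ cannot exist, so $\Bier(\Delta)$ is not cs-polytopal.
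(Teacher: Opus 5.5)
Your proof is correct. It follows the paper's overall strategy: a one-dimensional cs diagram, with hypothesis (1) forcing $|a_r|<|a_\ell|$ and hypothesis (2) forcing the reverse. The difference is which faces you feed into \cref{thm:csdiagram}.

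The paper works with the $n/2$-element faces supported on $C\sqcup\{r\}$ and on $E\sqcup\{r\}$, where $E=[n]\setminus(C\sqcup\{r,\ell\})$. It chooses all signs so that the diagram values $\bar{\bz}_i$ are positive, and obtains
\[
\sum_{i\in C}\bar{\bz}_i+\bar{\bz}_r<\sum_{i\in E}\bar{\bz}_i+\bar{\bz}_\ell
\qand
\sum_{i\in E}\bar{\bz}_i+\bar{\bz}_r<\sum_{i\in C}\bar{\bz}_i+\bar{\bz}_\ell .
\]
Adding these gives $\bar{\bz}_r<\bar{\bz}_\ell$. You instead use facets. There the comparison set is just $\relint\conv\{\pm a_\ell\}$, so the triangle inequality finishes in one line.

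The cost of your choice is that this interval collapses to $\{0\}$ when $a_\ell=0$. That is why you need the separate bipyramid argument, and that argument is valid: $\bx_\ell$ lies outside the span of the other $\bx_i$, so every facet contains $\pm\bx_\ell$. This contradicts the facet $\{x_i\}_{i\in C\sqcup\{r\}}\cup\{y_i\}_{i\in E}$.

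The paper's text does not address this degenerate case at all. Its instruction to choose $\bar{\bz}_i>0$ for every $i$ tacitly assumes no diagram value vanishes. With the paper's $n/2$-faces, though, the fix is one line. Take $\bar{\bz}_i\ge 0$ instead. If a right-hand sum such as $\sum_{i\in E}\bar{\bz}_i+\bar{\bz}_\ell$ were $0$, the face condition would force the left-hand sum to be $0$ as well. Since $C\sqcup\{r\}\sqcup E\sqcup\{\ell\}=[n]$, every $\bar{\bz}_i$ would then vanish, which is impossible.

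So the paper's route absorbs degeneracy almost for free, while your route needs extra geometric input for it. In exchange, yours gives a cleaner inequality step and explicitly covers a case the paper's write-up passes over.
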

\begin{proof}
    Suppose for contradiction that $\Bier(\Delta)$ is isomorphic to the boundary of a cs polytope $P$. Denote the vertices of $\Bier(\Delta)$ as $x_1, \dots, x_n, y_1, \dots, y_n$, where $x_i, y_i$ are antipodes for $i \in [n]$. Since it is cs $k$-neighborly, any pair of non-antipodal vertices is connected. Therefore, the antipodes of $\Bier(\Delta)$ must be antipodes in the realization $P$ as well. For each $i \in [n]$, let $\bx_i, \by_i \in \R^{n-1}$ denote the vertices of $P$ corresponding to $x_i, y_i$ respectively. Let $\left\{\bar{\bx}_1, \dots, \bar{\bx}_n, \bar{\by}_1, \dots, \bar{\by}_n \right\} \subseteq \R$ be a cs transform of the vertex set of $P$. 

    Suppose $C = \{i_1, \dots, i_{(n-2)/2}\}$, and $[n] \setminus (C \sqcup \{r, \ell\}) = \{j_1, \dots, j_{(n-2)/2}\}$. By the first part of condition (1) and the definition of Bier spheres, any set of the form $\{z_{i_1}, \dots, z_{i_{(n-2)/2}}, z_r\}$ where each $z_i \in  \{x_i, y_i\}$ is a face of $\Bier (\Delta)$. Choose $z_i \in \{x_i, y_i\}$ such that $\bar{\bz}_i > 0$ for all $i \in [n]$. By \cref{thm:csdiagram}, this implies 
    \[
    \bar{\bz}_{i_1} + \cdots +\bar{\bz}_{i_{(n-2)/2}} + \bar{\bz}_r < \bar{\bz}_{j_1} + \cdots +\bar{\bz}_{j_{(n-2)/2}} + \bar{\bz}_\ell.
    \]
    Similarly, by the second part of condition (1), any set of the form $\{z_{j_1}, \dots, z_{j_{(n-2)/2}}, z_r\}$ where each $z_i \in  \{x_i, y_i\}$ is a face of $\Bier (\Delta)$. Therefore, assuming again all $\bar{\bz}_i > 0$, we obtain
    \[
    \bar{\bz}_{j_1} + \cdots +\bar{\bz}_{j_{(n-2)/2}} + \bar{\bz}_r < \bar{\bz}_{i_1} + \cdots +\bar{\bz}_{i_{(n-2)/2}} + \bar{\bz}_\ell.
    \]
    These two inequalities together imply $\bar{\bz}_r < \bar{\bz}_\ell$. On the other hand, by an analogous argument, (2) implies $\bar{\bz}_\ell < \bar{\bz}_r$. This is a contradiction. Therefore, $\Bier (\Delta)$ cannot be realized as a cs polytope. 
\end{proof}
\begin{example}\label{ex:n=6noncs}
    Let $n = 6$. Let $\Delta$ be the simplicial complex generated by the following faces
    \begin{enumerate}
        \item all sets $A \subseteq [6]$ with $|A| \le 2$
        \item $\{1, 2, 3\}, \{3,5,6\}, \{2,4,6\}, \{1,4,5\}$ (taking $r = 3$, $\ell = 4$, $C = \{1, 2\}$, $D = \{2, 6\}$)
        \item one set from each pair $A, [6]\setminus A$ for $A \in \{\{1,2,5\},\{1,2,6\},\{1,3,4\},\{1,3,6\},\{1,4,6\},\{1,5,6\}\}$. 
    \end{enumerate}
Then $\Delta$ satisfies the hypothesis of \cref{prop:noncspoly}, and hence $\Bier(\Delta)$ is a cs sphere that is not cs-polytopal. 
\end{example}
\begin{remark}
    The smallest $n$ for which $\Delta$ can satisfy \cref{prop:noncspoly} is $n = 6$. When $n = 4$, the $n/2$-sized pairs are $\{\{1,2\},\{3,4\}\}, \{\{1,3\},\{2,4\}\},\{\{1,4\},\{2,3\}\}$. Taking any $C, r, \ell$, the leftover pair must contain the set $\{r, \ell\}$, so we cannot find $D$.
\end{remark}
Note that $r, \ell, C, D$ satisfying the conditions in \cref{prop:noncspoly} exist for all even $n \ge 6$, so \cref{t:intro1} follows from \cref{prop:noncspoly}. As we will see in \cref{sec:all12vert}, all Bier spheres with 12 vertices can be realized as boundaries of polytopes with explicit coordinates. Therefore, \cref{ex:n=6noncs} provides us with constructions of cs-neighborly Bier spheres that are polytopal but not cs-polytopal. In other words, the combinatorial central symmetry on the sphere cannot be realized geometrically on any convex realization of the sphere. Our optimization algorithm also yields some higher-dimensional instances, sampling a total of $2098$ constructions analogous to \cref{ex:n=6noncs} for $n \in \{8, 10, 12, 14, 16, 18\}$ with convex realizations, see \cite{ourcode}.

Given a simplicial complex $\Delta$, the \emph{two-point suspension} of $\Delta$ with two new vertices $v, w$ is the simplicial complex generated by $\{v \cup \sigma, w \cup \sigma \st \sigma \in \Delta\}$. Given a full-dimensional polytope $P \subset \R^m$, the \emph{bipyramid} over $P$ is obtained by embedding $P$ into $\R^{m+1}$, computing the barycenter $\bb$ of its vertices and taking the convex hull of $P \times \{0\} \cup \{(\bb, 1), (\bb, -1)\}$. The following proposition allows us to extend the sphere constructions to higher dimensions. 
\begin{proposition}\label{prop:suspensionnoncs}
Let $S$ be a centrally symmetric $(d-1)$-sphere on $2n$ vertices, and let $S'$ be its two-point suspension. If $S$ is polytopal but not cs-polytopal, then so is $S'$. In addition, if $S$ is cs $k$-neighborly for some $k$, then so is $S'$. 
\end{proposition}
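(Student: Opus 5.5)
The plan has four parts: show that $S'$ is centrally symmetric, that it is polytopal, that it is not cs-polytopal, and that it inherits neighborliness.

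\textbf{Central symmetry and polytopality.} Let $\iota$ be the involution on $S$. Extend it to $S'$ by swapping the two suspension vertices, $\iota(v)=w$. A nonempty face of $S'$ has one of the forms $\sigma$, $v\cup\sigma$ or $w\cup\sigma$ with $\sigma\in S$. Faces of the first form are not fixed because $\iota(\sigma)\neq\sigma$ for nonempty $\sigma$. Faces of the second and third forms are not fixed because $\iota$ exchanges $v$ and $w$. Hence $S'$ is cs. For polytopality, take a realization $P$ of $S$. The bipyramid over $P$, as defined just before the statement, has boundary complex equal to the two-point suspension of $\partial P$, since the barycenter $\bb$ lies in the interior of $P$. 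So $S'$ is polytopal.

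\textbf{Failure of cs-polytopality.} This is the main step. Suppose $S'$ is realized by a polytope $Q$ with $Q=-Q$, and the combinatorial symmetry is realized by $\bx\mapsto-\bx$. The first question is which involution on $S'$ is being realized. If $S$ is cs $k$-neighborly with $k\ge2$, the antipodal pairs of $S'$ are exactly its nonedges. These are the pairs of $S$ together with $\{v,w\}$, so the involution is forced to be $\iota$ as extended above; in general we simply argue about that extension.

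Then $\bv_w=-\bv_v$. The vertex figure of $Q$ at $v$ is combinatorially the link of $v$, which is $S$. Instead, consider the section $Q\cap H$, where $H=\{\bx:\ba^\top\bx=0\}$ is a hyperplane through the origin separating $v$ from $w$ and containing no vertex. The cleanest choice is $H$ with $\ba^\top\bv_v>0$ and $\ba^\top\bu=0$ for all other vertices $\bu$. Such $H$ exists because no facet of $S'$ contains both $v$ and $w$, so every facet is a cone from $v$ or from $w$ over a facet of $S$.

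Here is the direct argument. Every vertex $\bu\ne\bv_v,\bv_w$ lies in the link of $v$ and of $w$, so $Q=\conv(R\cup\{\bv_v,-\bv_v\})$ with $R=-R$, where $R$ is the set of the other vertices. Project $R$ along $\bv_v$ onto the hyperplane $\bv_v^{\perp}$. The projected points are cs, since projection is linear. Their convex hull $P'$ has boundary combinatorially equal to the link of $v$, which is $S$. Indeed, the faces of $Q$ containing $v$ correspond to the faces of $P'$ visible from the direction $\bv_v$, and $\bv_v$ projects to the origin, which lies inside $P'$ because $Q$ is symmetric. Thus $P'$ is a cs polytope realizing $S$, a contradiction. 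I expect verifying the correspondence between the link of $v$ and $\partial P'$ (the standard vertex-figure-via-projection argument) to be the main technical point.

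\textbf{Neighborliness.} Let $A$ be an antipode-free set of $k$ vertices of $S'$. It contains at most one of $v,w$. If it contains neither, then $A$ is a face of $S$, and hence of $S'$. If it contains one, say $v$, then $A\setminus\{v\}$ has size $k-1$ and is antipode-free in $S$, so it is a face of $S$ by cs $k$-neighborliness (which implies $(k-1)$-neighborliness). Therefore $A\in S'$.
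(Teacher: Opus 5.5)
Your route is the paper's: a bipyramid for polytopality, orthogonal projection of a cs realization $Q$ of $S'$ along $\bv_v$ for non-cs-polytopality, and a direct check for neighborliness. That check and your verification that $S'$ is cs are fine. Two steps of the middle part are not established, though.

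\textbf{The antipode of $v$.} Writing ``in general we simply argue about that extension'' does not suffice. Cs-polytopality of $S'$ only asks for \emph{some} cs polytope with boundary $S'$, and a priori its map $\bx\mapsto-\bx$ could induce a free involution other than your extension of $\iota$. The missing line, which is how the paper starts, is this: in a cs polytope, $\bx$ and $-\bx$ are never joined by an edge, since their midpoint $\mathbf{0}$ is interior. But $v$ is adjacent to every vertex of $S$, so $-\bv_v=\bv_w$ in every cs realization, with no neighborliness needed. The involution then induced on $S$ may differ from $\iota$, which is harmless for the definition of cs-polytopal. Also delete the aside claiming a hyperplane through $\mathbf{0}$ contains all vertices other than $\pm\bv_v$. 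That would force $Q$ to be a bipyramid, and it is false in general: in a hexagonal bipyramid, lifting and lowering alternate equatorial vertices slightly keeps central symmetry and the combinatorial type but destroys this property.

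\textbf{The identification $\partial P'\cong S$.} This is the heart of the proof, and you leave it as ``the main technical point'' with a misstated justification. The faces of $Q$ containing $v$ do not correspond to faces of $P'$; their images cover all of $P'$. Nor does the vertex-figure fact apply as is: projecting along $\bv_v$ behaves like projecting from a point just beyond $\bv_v$ only because of the following sign condition, which is the missing idea.

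Since $\mathbf{0}$ is interior to $Q$, every facet is $Q\cap\{\bx:\mathbf{n}^\top\bx=c\}$ with $c>0$. Every facet of $S'$ contains exactly one of $v,w$. Hence $\mathbf{n}^\top\bv_v=c>0$ for facets through $\bv_v$, and $\mathbf{n}^\top\bv_v=-c<0$ for facets through $-\bv_v$. Write $F_\sigma$ for the face of $Q$ corresponding to $\sigma\in S$. Consequently:
\begin{enumerate}
\item No facet hyperplane is parallel to $\bv_v$, so the projection $\pi$ is injective on each proper face of $Q$.
\item No face through $\pm\bv_v$ has a supporting normal orthogonal to $\bv_v$.
\item Each $F_\sigma$ lies in facets of both kinds (e.g.\ $v\cup\tau$ and $w\cup\tau$ for a facet $\tau\supseteq\sigma$ of $S$). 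So some strictly positive combination of the normals of all facets containing $F_\sigma$ is orthogonal to $\bv_v$ and exposes exactly $F_\sigma$.
\end{enumerate}
The proper faces of $P'=\pi(Q)$ are precisely the images of faces of $Q$ exposed by normals orthogonal to $\bv_v$. They are therefore exactly the simplices $\pi(F_\sigma)$, and $\sigma\mapsto\pi(F_\sigma)$ is an isomorphism $S\cong\partial P'$. This is the content of the paper's remark that each facet $\conv(F_\sigma,\bv)$ has outer normal with positive component along $2\bv$, and none is parallel to $\bv\bw$. With it, your argument is complete.
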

\begin{proof}
Suppose $S$ is realized by some polytope $P$. Then the bipyramid $P'$ of $P$ realizes $S'$. Now, suppose $S'$ is realized by some cs polytope $P'$. Let $v, w$ denote the two vertices introduced by the suspension. Since $v, w$ must be connected to all vertices of $S$, they must be antipodes in $P'$. Let $\bv, \bw$ denote the two corresponding vertices. Let $P$ be the orthogonal projection of $P'$ along the vector $\bv - \bw = 2\bv$. Then $P$ is cs. 

We will show that the boundary of $P$ is isomorphic to $S$. For every face $\sigma$ of $S$, let $F_\sigma$ denote the face of $P'$ corresponding to $\sigma$. The set of facets of $P'$ containing $\bv$ is exactly \{$\conv(F_\sigma, \bv) \st \sigma \text{ is a facet of } S\}$. The outer normal of every facet $\conv(F_\sigma, \bv)$ has positive component along the projection vector $2\bv$. Moreover, none of these facets is parallel to $\bv\bw$, otherwise both vertices would be in that facet. By convexity of $P'$, we can conclude that the projection is a simplicial bijection between the boundary of $\bigcup_{\sigma \in S}\conv(F_\sigma, \bv)$, which is isomorphic to $S$, and the boundary of $P$. 
    
The last part of the proposition follows immediately from the definition of cs $k$-neighborliness and suspension. 
\end{proof}

\subsection{The Bokowski--Ewald--Kleinschmidt example} To the best of our knowledge, prior to our construction, there was only one example of a polytopal but non-cs-polytopal cs simplicial sphere, modulo any extension of the example using \cref{prop:suspensionnoncs}. This example is constructed in~\cite{bokowski1984combinatorial} and is denoted by $\sC$. The sphere $\sC$ is $3$-dimensional with $f$-vector $(1, 10, 38, 56, 28)$, and its facets are listed in \cref{tab:oldexfacets}. At the end of \cite[Section 2]{bokowski1984combinatorial}, it is remarked that the central symmetry of $\sC$ given by the involution 
\[
\varphi(1) = 10, \quad \varphi(2) = 5, \quad \varphi(3) = 6, \quad \varphi(4) = 9, \quad \varphi(7) = 8
\]
is not induced by an affine symmetry of any polytope realizing $\sC$. In \cite[Section 6.2]{BS1989}, it is mentioned that the partial chirotope uniquely determined by $\sC$ already violates the symmetry. Here we present an alternative proof of the non-cs-polytopality of $\sC$ using \cref{thm:csdiagram}.

\begin{table}[ht]
\centering
\begin{tabular}{cccc @{\hspace{3em}} cccc @{\hspace{3em}} cccc @{\hspace{3em}} cccc}
\hline
\noalign{\vskip 4pt}
1 & 2 & 3  & 7 & 1 & 2 & 4 & 8 & 9  & 2  & 3  & 8 & 9 & 2  & 10 & 7  \\
1 & 3 & 4  & 7 & 2 & 3 & 4 & 8 & 9  & 3  & 10 & 8 & 2 & 3  & 10 & 7  \\
1 & 4 & 6  & 7 & 1 & 5 & 6 & 8 & 9  & 10 & 6  & 8 & 9 & 5  & 6  & 7  \\
4 & 5 & 6  & 7 & 1 & 4 & 5 & 8 & 10 & 5  & 6  & 8 & 9 & 10 & 5  & 7  \\
1 & 2 & 6  & 7 & 1 & 2 & 6 & 8 & 9  & 2  & 6  & 8 & 9 & 2  & 6  & 7  \\
3 & 4 & 5  & 7 & 3 & 4 & 5 & 8 & 3  & 10 & 5  & 8 & 3 & 10 & 5  & 7  \\
1 & 4 & 5  & 6 & 1 & 2 & 3 & 4 & 9  & 10 & 5  & 6 & 9 & 2  & 3  & 10 \\
\noalign{\vskip 4pt}
\hline
\end{tabular}
\caption{The list of facets of the polytopal but non-cs-polytopal cs sphere $\sC$. \label{tab:oldexfacets}}
\end{table}

\begin{proposition}
    $\sC$ is not cs-polytopal.
\end{proposition}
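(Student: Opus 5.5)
The plan is to repeat the argument of \cref{prop:noncspoly} using the cs diagram of \cref{thm:csdiagram}. Here $\sC$ is a $3$-sphere on $2n=10$ vertices, so a cs realization would be a cs $4$-polytope with $n=5$ antipodal pairs. Its cs transform therefore lives in $\R^{n-d}=\R^{1}$, and every condition in \cref{thm:csdiagram} becomes a linear inequality among five real numbers.

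\textbf{Step 1: fix the antipodal pairing.} Suppose $P$ is a cs polytope realizing $\sC$. The map $\bx\mapsto-\bx$ induces a fixed-point-free involutive combinatorial automorphism of $\sC$. Two antipodal vertices $\pm\bx$ cannot span an edge, because the segment between them passes through the origin, which lies in the interior of $P$. So the pairs of the induced involution must be nonedges of $\sC$. From the $f$-vector, $\sC$ has $45-38=7$ nonedges. I would list them from \cref{tab:oldexfacets} and check that the only perfect matching of the ten vertices by nonedges that is also an automorphism is $\varphi$, with pairs $\{1,10\},\{2,5\},\{3,6\},\{4,9\},\{7,8\}$. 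If several matchings survive, I would run the argument below once for each of them.

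\textbf{Step 2: translate facets into inequalities.} Write the transform values as $\pm t_1,\dots,\pm t_5\in\R$, one pair for each antipodal pair. Each of the 28 facets has four vertices, and since no facet contains an antipodal pair, these four vertices have distinct suffixes. The complementary set of suffixes is then a single index $j$, and the right-hand side of \cref{thm:csdiagram} is the open interval $(-|t_j|,|t_j|)$. A facet therefore yields the strict inequality
\[
|\varepsilon_1 t_{i_1}+\varepsilon_2 t_{i_2}+\varepsilon_3 t_{i_3}+\varepsilon_4 t_{i_4}|<|t_j|.
\]
In the other direction, every antipode-free $4$-set that is not a facet yields the reverse weak inequality $\ge$. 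As in the proof of \cref{prop:noncspoly}, I would choose a representative $z$ in each pair with $\bar{\bz}>0$. This replaces the unknown signs by an unknown choice of one vertex per pair, which is $2^5$ cases; the central symmetry reduces this to $16$.

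\textbf{Step 3: derive the contradiction.} In each case, I would look for facets whose four signed entries combine with their reverses, as in \cref{prop:noncspoly}, into a pair of opposite strict inequalities $t_a<t_b$ and $t_b<t_a$. The natural candidates are facets that share three suffixes and differ in the remaining one, together with the facets of the antipodal side. If that fails, I would consider the full system of 28 strict and 52 weak linear inequalities and exhibit infeasibility with a Farkas certificate, meaning a nonnegative combination summing to $0<0$.

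The main obstacle is Step 3, or more precisely choosing compact contradictory subsystems uniformly across the sign cases. The case split and the removal of absolute values are bookkeeping, but it is not clear in advance that a single short pattern like the one in \cref{prop:noncspoly} works in every case. I would first search by computer (an LP feasibility check per case) and then present the few inequalities that certify each contradiction.
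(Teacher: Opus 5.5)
\textbf{There is a genuine gap: your proposal never exhibits the contradiction.} Steps 1 and 2 are a sound setup. Antipodes must be nonedges. The only nonedge matching that is an automorphism is $\varphi$; the other one, pairing $1$ with $9$ and $4$ with $10$, sends the facet $\{1,2,3,7\}$ to the nonface $\{5,6,8,9\}$. With $t_i=\bar{\bx}_i$, each facet with complementary suffix $j$ gives $|s|<|t_j|$, where $s$ is the signed sum of its four values. But Step~3, which is the whole proof, is only a search plan, and you say yourself you do not know whether a usable short pattern exists.

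Your fallback also fails as described. After fixing the signs of the $t_i$, a facet condition $-|t_j|<s<|t_j|$ is indeed a pair of linear inequalities. A nonface condition $|s|\ge|t_j|$, however, is the \emph{union} of the half-spaces $s\ge|t_j|$ and $s\le-|t_j|$. So the $52$ nonface constraints do not cut out a polyhedron, and neither an LP feasibility check nor a Farkas certificate applies to them without further branching.

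The paper handles nonfaces differently. For a nonface, $s^2\ge t_j^2$, so the two full signed sums $s-t_j$ and $s+t_j$ have the same sign when nonzero. Chaining this through the nonfaces $\{y_1,y_2,y_4,y_5\}$, $\{y_1,y_2,x_3,y_4\}$ and $\{y_1,x_3,y_4,x_5\}$ shows that $-t_1-t_2-t_3-t_4-t_5$ and $-t_1+t_2+t_3-t_4+t_5$ have the same sign. The facets $\{y_1,y_2,y_3,y_4\}$ and $\{y_1,x_2,x_3,y_4\}$ force them to have opposite signs.

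\textbf{Your primary, facets-only idea (as in \cref{prop:noncspoly}) would in fact close the gap.} It gives a certificate using only the convex conditions.
\begin{itemize}
\item \emph{Lemma.} If $|w+p|<|q|$ and $|w+q|<|p|$, then $p,q\neq 0$ and $pq>0$. Indeed, if $p>0>q$, the first inequality gives $w<-p-q$ and the second gives $w>-p-q$; the case $q>0>p$ is symmetric.
\item The facets $\{1,2,3,7\}=\{x_1,x_2,x_3,x_5\}$ and $\{1,2,3,4\}$ give $|w+t_5|<|t_4|$ and $|w+t_4|<|t_5|$ with $w=t_1+t_2+t_3$. Hence $t_4t_5>0$.
\item The facets $\{2,3,7,10\}=\{y_1,x_2,x_3,x_5\}$ and $\{2,3,9,10\}=\{y_1,x_2,x_3,y_4\}$ give $|v+t_5|<|t_4|$ and $|v-t_4|<|t_5|$ with $v=-t_1+t_2+t_3$. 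Hence $-t_4t_5>0$, a contradiction.
\end{itemize}
This needs no case split over representatives. For full rigor, exclude the case $t_j=0$, where \cref{thm:csdiagram} reads $s=0$ rather than $|s|<|t_j|$. The case $r=n$ of the theorem rules it out, since five vertices would then span a face of a simplicial $4$-polytope. Some such explicit certificate must be written down before the proposal is a proof.
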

\begin{proof}
Suppose for contradiction that $\sC$ is isomorphic to the boundary of a cs polytope $P$. Denote the vertices of $\sC$ as \[
  x_1=1,\,y_1=10;\quad x_2=2,\,y_2=5;\quad x_3=3,\,y_3=6;\quad
  x_4=4,\,y_4=9;\quad x_5=7,\,y_5=8.
\]
One can check that these antipodal pairings are the only possible central symmetry of $\sC$. Although $\sC$ misses two edges between the non-antipodal pairs of vertices $\{1, 9\}, \{4, 10\}$, making them antipodal does not yield a free involution. For each $i \in [5]$, let $\bx_i, \by_i \in \R^4$ denote the vertices of $P$ corresponding to $x_i, y_i$ respectively. Let $\left\{\bar{\bx}_1, \dots, \bar{\bx}_5, \bar{\by}_1 = -\bar{\bx}_1, \dots, \bar{\by}_5 = -\bar{\bx}_5 \right\} \subseteq \R$ be a cs transform of the vertex set of $P$. We first show that if $\{z_{i_1}, \dots, z_{i_4}\}$ is a nonface and $\bar{\bz}_{i_1} + \cdots +\bar{\bz}_{i_4}-\bar{\bz}_{i_5} \ne 0$, then $\bar{\bz}_{i_1} + \cdots +\bar{\bz}_{i_4}-\bar{\bz}_{i_5}$ must have the same sign as $\bar{\bz}_{i_1} + \cdots +\bar{\bz}_{i_4}+\bar{\bz}_{i_5}$. Suppose $\bar{\bz}_{i_1} + \cdots +\bar{\bz}_{i_4} > \bar{\bz}_{i_5}$. If $\bar{\bz}_{i_5} > 0$, then $\bar{\bz}_{i_1} + \cdots +\bar{\bz}_{i_4} > \bar{\bz}_{i_5} > - \bar{\bz}_{i_5}$. If $\bar{\bz}_{i_5} < 0$, then it must be that $\bar{\bz}_{i_1} + \cdots +\bar{\bz}_{i_4} > - \bar{\bz}_{i_5}$, since \cref{thm:csdiagram} forbids $\bar{\bz}_{i_1} + \cdots +\bar{\bz}_{i_4} \in \relint \conv\{\pm \bar{\bz}_{i_5}\}$. The argument for when $\bar{\bz}_{i_1} + \cdots +\bar{\bz}_{i_4} < \bar{\bz}_{i_5}$ is analogous. 

We apply the above observation consecutively to the nonfaces
\[
  \{y_1,y_2,y_4,y_5\}=\{10, 5, 9, 8\},\quad
  \{y_1,y_2,x_3,y_4\}=\{10, 5, 3, 9\},\quad
  \{y_1,x_3,y_4,x_5\}=\{10, 6, 9, 7\}.
\]
The result is that 
\[\bar{\by}_1 + \bar{\by}_2 + \bar{\by}_4 + \bar{\by}_5 - \bar{\bx}_3 = \bar{\by}_1 + \bar{\by}_2 + \bar{\by}_3 +\bar{\by}_4 - \bar{\bx}_5
\]
must have the same sign as 
\[
\bar{\by}_1 + \bar{\bx}_3 + \bar{\by}_4 + \bar{\bx}_5 + \bar{\bx}_2 = \bar{\by}_1 + \bar{\bx}_2 + \bar{\bx}_3 + \bar{\by}_4 + \bar{\bx}_5.
\]
However, $\{y_1, y_2, y_3, y_4\}= \{10, 5, 6, 9\}$ and $\{y_1, x_2,  x_3, y_4\} = \{10, 2, 3, 9\}$ are both facets. By \cref{thm:csdiagram}, if $\bar{\bx}_5 > 0$, then $\bar{\by}_1 + \bar{\by}_2 + \bar{\by}_3 +\bar{\by}_4 - \bar{\bx}_5 < 0$ and $\bar{\by}_1 + \bar{\bx}_2 + \bar{\bx}_3 + \bar{\by}_4 + \bar{\bx}_5 > 0$. If $\bar{\bx}_5 < 0$, then $\bar{\by}_1 + \bar{\by}_2 + \bar{\by}_3 +\bar{\by}_4 - \bar{\bx}_5 > 0$ and $\bar{\by}_1 + \bar{\bx}_2 + \bar{\bx}_3 + \bar{\by}_4 + \bar{\bx}_5 < 0$. This is a contradiction.
\end{proof}

The polytopality of the BEK sphere $\sC$ is established in \cite{bokowski1984combinatorial} and later proved independently in \cite{altshuler1984remark} and \cite[Section 3.4]{firsching2017realizability}. We remark that $\sC$ is not cs $k$-neighborly for any $k \ge 2$ because of its nonedges $\{1, 9\}, \{4, 10\}$. Therefore, \cref{prop:noncspoly} provides the first construction of cs-neighborly spheres that are polytopal but not cs-polytopal. Using \cref{prop:suspensionnoncs}, we can conclude \cref{t:intro2} from \cref{ex:n=6noncs} and the BEK sphere. 

\begin{remark}
As mentioned in \cite{BG1990}, there are
errors in both explicit convex realizations of $\sC$ provided in \cite{bokowski1984combinatorial}. These errors do not affect any result of that paper. In \cite[Table 2]{bokowski1984combinatorial}, the third coordinate of vertex $1$, denoted $x^1_3$, should be $1/3$ instead of $1$. At the end of the paper, one can find a different realization with vertices $a$ through $j$. A correction of the coordinates of these vertices can be found in \cite{Bokowski1991GeometricFlatEmbedding}. 
\end{remark}

\subsection{Noncontractible realization spaces of polytopes}\label{subsec:noncontractible} Let $S$ be a polytopal $(d-1)$-sphere on $n$ vertices, such that $S$ is combinatorially isomorphic to the boundary complex of a polytope $P \subset \R^d$. Let $R(S)$ denote the set of all possible convex realizations of $P$ inside $\R^d$, that is, $R(S) = \{(\ba_1, \dots, \ba_{n}) \in \R^d \st \conv(\ba_1, \dots, \ba_n) \cong P\}$. Note that the group of affine transformations acts on $R(S)$ by simply applying the affine transformation to a given realization of $P$. The~\emph{realization space} $\cR(S)$ of $S$ (or equivalently $\cR(P)$ of $P$) is the set of orbits of $R(S)$ under the group action mentioned above. A lot of research has been done in order to better understand the topological properties of $\cR(S)$. As an example, when $d = 3$, Steinitz's theorem (see~\cite[Chapter 4]{Z1995}) implies that $\cR(S)$ is contractible for every simplicial $2$-sphere $S$. 
For $d > 3$, the situation becomes drastically different, as can be seen from universality statements such as Mn{\"e}v's theorem~\cite{M1988} for polytopes with few vertices, Richter-Gebert's theorem~\cite{RG1996} for the case $d = 4$, and the Adiprasito--Padrol theorem for neighborly polytopes~\cite{AP2017} (see also~\cite{blog,V2023}). These theorems imply that the realization spaces of polytopes can be ``as bad as possible" in a very precise sense, see~\cite{V2006}.

    While there are many statements about the possible behaviors of realization spaces, in practice, most polytopes that have been studied have contractible realization spaces. In fact, in~\cite{Z1993} Ziegler mentions that the unique known simplicial polytope with a noncontractible realization space is the $3$-sphere $\sC$ originally defined in~\cite{bokowski1984combinatorial} that we considered earlier in this section. The realization space of $\sC$ is even disconnected~\cite{BG1990}.

    One way to understand why the realization space of the sphere $\sC$ fails to be contractible is by considering the fact that $\sC$ admits a combinatorial symmetry that is not realized geometrically. This observation follows from classical results on Smith theory. Although we believe~\cref{p:topology} is known to experts (see Ziegler's remark on Mn\"ev's paper~\cite{M1988} in~\cite[p. 4]{Z1993}), we include the following statements here for the reader's convenience.

    \begin{lemma}\label{l:eigenvalues}
        Let $S$ be a cs $(d-1)$-sphere that is polytopal but not cs-polytopal. Then, for every polytope $P \subset \R^d$ with boundary complex $S$, the combinatorial central symmetry of $P$ cannot be realized by affine transformations of $\R^d$.
    \end{lemma}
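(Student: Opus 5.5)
We argue by contrapositive: if some polytope $P$ with boundary complex $S$ has its combinatorial central symmetry $\iota$ realized by an affine map $A$ of $\R^d$, then $S$ is cs-polytopal. So assume $A(\bx) = M\bx + \bb$ is an affine transformation with $A(\bv_i) = \bv_{\iota(i)}$ for every vertex $\bv_i$ of $P$. Then $A$ maps the vertex set of $P$ onto itself, so $A(P) = P$.

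\textbf{Step 1: normalize to a linear involution.} Since $\iota$ is an involution, $A^2$ fixes every vertex of $P$. The vertices affinely span $\R^d$ because $P$ is full-dimensional, so $A^2 = \mathrm{id}$. The barycenter $\bc$ of the vertices is fixed by $A$, because $A$ permutes the vertices and preserves affine combinations. Translate so that $\bc = \mathbf{0}$. Then $A = M$ is linear with $M^2 = I$, so $M$ is diagonalizable with eigenvalues $\pm 1$, and $\R^d = E_+ \oplus E_-$.

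\textbf{Step 2: make the symmetry $-I$.} Let $\pi_+, \pi_-$ be the projections onto $E_+$ and $E_-$ along this decomposition. The key point, and the step I expect to need the most care, is showing that $E_+ = \{0\}$ can be arranged. If $E_+ = \{0\}$ we are done, since then $M = -I$ and $P = -P$ is a cs realization of $S$.

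If $E_+ \neq \{0\}$, consider the linear map $T = \pi_- + t\,\pi_+$ for small $t > 0$. This map commutes with $M$ and is invertible, so $T(P)$ is again a realization of $S$ carrying the same symmetry $M$. This alone does not remove $E_+$, so I would argue combinatorially instead. For a vertex $\bv$, the midpoint $(\bv + M\bv)/2 = \pi_+(\bv)$ is the midpoint of the segment between $\bv$ and $\iota(\bv)$. The face structure of $S$ gives the contradiction. Since $\bigl(P \cap E_+\bigr)$ is nonempty and $M$-invariant, it contains a point $p$ fixed by $M$. That point lies in the relative interior of a unique face $F$ of $P$. Because $Mp = p$ and $M(P) = P$, we get $M(F) = F$, so $\iota$ maps the vertex set of the simplex $F$ to itself. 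If $F$ is a proper face, this contradicts freeness of $\iota$ on nonempty faces of $S$. Hence every point of $P \cap E_+$ is interior to $P$.

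It remains to rule out $\dim E_+ > 0$. Take a nonzero $\bu \in E_+$. The ray $\{s\bu : s \ge 0\}$ from the interior point $\mathbf 0$ must exit $P$ through the boundary at some point $s_0\bu$. That point lies in $E_+$ and on $\partial P$, contradicting the previous paragraph. Therefore $E_+ = \{0\}$, $M = -I$, and $P$ is centrally symmetric with $\iota$ realized by $\bx \mapsto -\bx$, so $S$ is cs-polytopal.

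This contradiction proves the lemma. The main obstacle is the fixed-point-on-the-boundary argument above: one must check that the minimal face containing a fixed boundary point is $M$-invariant, which follows because faces of $P$ are mapped to faces by the affine automorphism $M$.
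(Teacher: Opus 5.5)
Your proof is correct and is essentially the paper's argument: move a fixed point of $A$ to the origin so that $A$ becomes a linear involution, then rule out the eigenvalue $1$, because a pointwise-fixed line through an interior point meets $\partial P$ in a fixed point whose carrier face is $\iota$-invariant, which forces $M=-I$. The differences are cosmetic. You use the vertex barycenter instead of Brouwer's theorem, and you justify $A^2=\mathrm{id}$, which the paper only asserts. The digression with $T=\pi_-+t\,\pi_+$ does nothing and can be deleted, and the face argument should simply be applied to every point of $P\cap E_+$, since $M$ fixes all of them.
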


    \begin{proof}
        Assume for the sake of contradiction that there is a simplicial polytope $P \subset \R^d$ with boundary complex $S$, where the combinatorial symmetry of $S$ is realized by an affine transformation $A$, and $P$ is not cs.
            
        Since $P$ is compact and convex, and $A(P) = P$, Brouwer's fixed point theorem implies $A$ has a fixed point $\bx$, which we may view as the origin after a translation. By definition, applying translations does not change
        the point in $\cR(S)$, so we may assume without loss of generality that $P$ contains the origin, and that $A$ is a linear transformation.
        
        The linear transformation $A$ is an involution, so we know $A^2 = I$, and hence the minimal polynomial of $A$ divides $x^2 - 1 = (x-1)(x+1)$. The roots of this polynomial imply that all eigenvalues of $A$ are in $\{-1, 1\}$. If $A$ had an eigenvalue equal to $1$, then taking an eigenvector $\bv$ of $A$ such that $A\bv = \bv$, we get a line passing through the origin that is pointwise fixed by $A$. Since $P$ contains the origin, this line intersects the boundary of $P$ and hence there is a point $\by$ in the boundary of $P$ such that $A \by = \by$. The point $\by$ is in the relative interior of a unique face $\sigma$ of $P$. Since $A$ maps faces to faces, this means that $A$ fixes the entire face $\sigma$, so $A$ does not realize the free combinatorial involution of $S$. Therefore, every eigenvalue of $A$ must be equal to $-1$. Diagonalizing $A$, we then conclude that $A = -I$. This is a contradiction, since this implies $P$ is cs, but we know $S$ is not cs-polytopal.
    \end{proof}

    \begin{remark}
        We note that~\cref{l:eigenvalues} is required only for the case of central symmetries, since the definition of a cs polytope requires that a specific linear transformation realize the geometric symmetry. In general, one could say a combinatorial symmetry $\phi$ of a polytope $P$ is geometrically realized by a realization $Q$ of $P$ whenever $Q$ is a fixed point of the map induced by $\phi$ on $\cR(P)$.
    \end{remark}

    \begin{lemma}[{\cite[Corollary 4.6]{B1960}}]\label{l:topology}
        Let $S$ be a polytopal sphere such that $\cR(S)$ is contractible. For a prime $p$, suppose that $\Z_p$ acts on $\cR(S)$.  Then the action of $\Z_p$ on $\cR(S)$ has a fixed point.
    \end{lemma}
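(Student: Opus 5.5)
This is a classical consequence of Smith theory, cited from Bredon. I would not reprove it from scratch; instead I would reduce it to the mod $p$ Smith theorem and check that $\cR(S)$ satisfies its hypotheses.

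The statement to invoke is the following. If $\Z_p$ acts on a finite-dimensional (or finitistic) paracompact space $X$ that is $\Z_p$-acyclic in \v{C}ech cohomology, then the fixed point set $X^{\Z_p}$ is nonempty and is itself $\Z_p$-acyclic.

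\textbf{Step 1: topological hypotheses.}
\begin{itemize}
\item Observe that $R(S)$ is a semialgebraic subset of $\R^{dn}$. It is cut out by the strict polynomial sign conditions saying that each facet's vertices span a hyperplane with all other vertices strictly on one side, with the correct orientation.
\item The quotient by the affine group can be modeled semialgebraically by fixing an affine basis. Choose $d+1$ affinely independent vertices of a facet-plus-one-vertex, and send them to the standard simplex. This yields a semialgebraic slice homeomorphic to $\cR(S)$.
\item Semialgebraic sets admit finite triangulations and are locally contractible. Hence $\cR(S)$ is a finite-dimensional, metrizable (so paracompact) ENR.
\item For such spaces \v{C}ech and singular cohomology agree. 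So contractibility gives $\tilde H^*(\cR(S);\Z_p)=0$.
\end{itemize}

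\textbf{Step 2: the action.} A combinatorial symmetry relabels vertices, which is a polynomial map on $R(S)$. It commutes with the affine group action, so it descends to a continuous action on $\cR(S)$. For the lemma itself, any continuous $\Z_p$-action suffices.

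\textbf{Step 3: apply Smith theory.} Apply the Smith theorem for $p$ prime to the finite-dimensional $\Z_p$-acyclic space $\cR(S)$. Its fixed set has the mod $p$ cohomology of a point. In particular, $H^0$ being $\Z_p$ forces the fixed set to be nonempty.

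\textbf{Main obstacle.} The delicate point is Step 1: making sure the quotient $\cR(S)$ really is finite-dimensional and paracompact. Smith theory fails for infinite-dimensional contractible spaces; for example, $\Z_p$ acts freely on contractible $E\Z_p$. I would handle this through the semialgebraic slice description. If one prefers to avoid the slice, one can instead note the following: the affine group acts freely and properly on $R(S)$ (the vertices affinely span $\R^d$), so $\cR(S)$ is a finite-dimensional manifold quotient, which suffices.
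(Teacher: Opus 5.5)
Your proposal is correct and takes essentially the paper's approach: the paper also just invokes Smith theory via \cite[Corollary 4.6]{B1960}, and \cref{r:topology} checks the same hypotheses as your Step 1. Those are that $\cR(S)$ is an open semialgebraic set (hence locally compact, paracompact and finite-dimensional), and that the relevant Alexander--Spanier (equivalently \v{C}ech) cohomology agrees with singular cohomology, so contractibility gives mod $p$ acyclicity. Your affine-slice (or free proper action) argument simply spells out the paper's unproved assertion that $\cR(S)$ is an open semialgebraic subset of some $\R^t$.
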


    \begin{remark}\label{r:topology}
        \cref{l:topology} is a very special case of the actual statement of~\cite[Corollary 4.6]{B1960}. The exact assumptions of the statement are very technical, so we avoid the explicit definitions here, but we note that most of the terms are defined in the references we mention. We now briefly explain why the assumptions in~\cite[Corollary 4.6]{B1960} are satisfied in our setting:
        \begin{enumerate}
            \item For every polytopal sphere $S$, the realization space $\cR(S)$ is an open semialgebraic set, and in particular $\cR(S)$ is locally compact Hausdorff.
            \item Semialgebraic sets  are paracompact topological spaces (see~\cite{blog} and~\cite[Chapters 3 and 4]{V2023}).
            \item Since $\cR(S)$ is a semialgebraic set inside $\R^t$ for some finite $t$, the dimension condition holds.
            \item As is mentioned in~\cite[Preliminaries]{B1960}, the notation $H^i(X, \Z_p)$ in~\cite[Corollary 4.6]{B1960} denotes the $i$-th graded piece of the Alexander--Spanier cohomology ring (with closed support). Since $\cR(S)$ satisfies the properties from the items above, it is known (see~\cite[Example 6.9.2 and Corollary 6.9.5]{S1966}) that the graded pieces of this cohomology ring coincide with singular cohomology groups. In particular, since in~\cref{l:topology} the space $\cR(S)$ is contractible, these groups are all trivial (in nonzero cohomology degrees). 
        \end{enumerate}
        
    \end{remark}

    \begin{proposition}\label{p:topology}
        Let $S$ be a cs sphere that is polytopal but not cs-polytopal. Then $\cR(S)$ is not contractible. 
    \end{proposition}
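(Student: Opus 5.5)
The plan is to argue by contradiction, combining \cref{l:eigenvalues} with \cref{l:topology} for $p = 2$. Assume $\cR(S)$ is contractible. Let $\iota$ be the combinatorial central symmetry of $S$, viewed as a permutation of the vertex labels. We define a $\Z_2$-action on $R(S)$ by relabelling: a realization $(\ba_1, \dots, \ba_{2n})$ is sent to $(\ba_{\iota(1)}, \dots, \ba_{\iota(2n)})$. Since $\iota$ is an automorphism of $S$, the relabelled tuple is again a convex realization of $S$. Relabelling commutes with applying an affine map to all points, so the action descends to $\cR(S)$. It is an involution because $\iota^2 = \mathrm{id}$, hence an action of $\Z_2$. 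It is also continuous, since on $R(S)$ it is just a coordinate permutation and the quotient topology is compatible with it.

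By \cref{l:topology} with $p = 2$ (its hypotheses are justified in \cref{r:topology}), this action has a fixed point $[Q]$ with $Q = (\ba_1, \dots, \ba_{2n})$. Being fixed means there is an affine transformation $A$ with $A\ba_i = \ba_{\iota(i)}$ for all $i$. So $A$ permutes the vertices of $P = \conv(\ba_1,\dots,\ba_{2n})$ according to $\iota$, and therefore $A(P) = P$. In other words, $A$ geometrically realizes the combinatorial central symmetry of $S$ on the realization $P$.

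This contradicts \cref{l:eigenvalues}, which says that no polytope with boundary complex $S$ has its combinatorial central symmetry realized by an affine map, given that $S$ is polytopal but not cs-polytopal. Strictly speaking, \cref{l:eigenvalues} only needs $A^2$ to act as the identity on the vertices. Because the vertices affinely span $\R^d$, this gives $A^2 = I$ as an affine map, which is what its proof uses. Hence $\cR(S)$ is not contractible.

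The main obstacle is making sure the $\Z_2$-action on $\cR(S)$ is well defined and continuous, so that \cref{l:topology} genuinely applies. One also has to handle the subtlety that the orbit space is taken modulo the full affine group. Then a fixed point in $\cR(S)$ gives only an affine $A$ with $A\ba_i = \ba_{\iota(i)}$, not a literal fixed tuple, and that is exactly the form of symmetry \cref{l:eigenvalues} rules out. I would also check carefully that $A$ is invertible, which holds because it maps a spanning set onto itself.
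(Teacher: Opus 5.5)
Your proposal is correct and follows essentially the same route as the paper: define the $\Z_2$-action on $\cR(S)$ by relabelling via the central symmetry, get a fixed point from \cref{l:topology} with $p=2$, and observe that a fixed point is exactly an affine map realizing the combinatorial central symmetry, which \cref{l:eigenvalues} rules out. Your extra checks fill in details the paper leaves implicit: that the action is well defined and continuous on the quotient, that $A$ is invertible, and that $A^2 = I$ because $\iota^2 = \mathrm{id}$ and the vertices affinely span $\R^d$.
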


    \begin{proof}
    Suppose for contradiction that $\cR(S)$ is contractible. 
        Since $S$ is a cs sphere, the realization space $\cR(S)$ admits a $\Z_2$ action $\tau : \cR(S) \to \cR(S)$. This action sends a realization $(\ba_1, \dots, \ba_m)$ to $(\ba_{\phi(1)}, \dots, \ba_{\phi(m)})$, where $\phi$ is the central symmetry of $S$. Moreover, by~\cref{l:eigenvalues}, for every $P \in \cR(S)$, we have $\tau(P) \neq P$, otherwise the combinatorial central symmetry of $P$ would be realized by an affine transformation. This is a contradiction since~\cref{l:topology} guarantees that $\tau$ must have a fixed point.
        \end{proof}
    
    A direct consequence of~\cref{p:topology} and~\cref{prop:noncspoly} is that cs Bier spheres can be easily used in order to describe combinatorial scenarios that translate to topological obstructions to a realization space being contractible. 

    \begin{theorem}
        Let $\Delta$ be a simplicial complex satisfying the assumptions in~\cref{prop:noncspoly}. If $\Bier(\Delta)$ is polytopal, then  $\cR(\Bier(\Delta))$ is not contractible.
    \end{theorem}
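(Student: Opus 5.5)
The plan is to chain \cref{prop:noncspoly} with \cref{p:topology}, after checking that $\Bier(\Delta)$ is a centrally symmetric sphere with the hypotheses \cref{p:topology} requires.

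First, $\Bier(\Delta)$ is a cs sphere. By assumption it is a cs $k$-neighborly sphere, and $\Delta=\Delta^*$ (compare \cref{prop:csneighbor}). So $\Bier(\Delta)$ is a simplicial $(n-2)$-sphere with the free involution $x_i\leftrightarrow y_i$. Since $k\ge 2$, this involution is the unique antipodal pairing, because it is read off from the nonedges. In particular, "not cs-polytopal" refers to this specific symmetry.

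Second, $\Bier(\Delta)$ is not cs-polytopal. The hypotheses on $r,\ell,C,D$ are exactly those of \cref{prop:noncspoly}, so no cs polytope has boundary complex isomorphic to $\Bier(\Delta)$.

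Third, we use the hypothesis that $\Bier(\Delta)$ is polytopal. Then $S=\Bier(\Delta)$ is a cs sphere that is polytopal but not cs-polytopal, and \cref{p:topology} gives directly that $\cR(\Bier(\Delta))$ is not contractible.

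Inside \cref{p:topology}, the $\Z_2$-action $\tau$ on $\cR(S)$ induced by the involution is fixed-point free by \cref{l:eigenvalues}, while \cref{l:topology} with $p=2$ forces a fixed point if $\cR(S)$ were contractible. There is no real obstacle in this argument. The only point needing care is the first step: the central symmetry used in \cref{p:topology} must be the same involution for which \cref{prop:noncspoly} rules out a cs realization. This holds because cs $2$-neighborliness pins down the antipodal map uniquely.
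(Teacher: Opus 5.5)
Your proposal is correct and matches the paper's argument: the paper states this theorem as a direct consequence of \cref{prop:noncspoly}, which gives non-cs-polytopality, combined with \cref{p:topology}, which says a polytopal but not cs-polytopal cs sphere has a noncontractible realization space. Your extra check that cs $2$-neighborliness pins down the antipodal map is correct but not strictly needed. The proof of \cref{prop:noncspoly} already uses this fact to rule out every cs realization, so \cref{l:eigenvalues} applies regardless of which involution you start from.
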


    We end this section by noting that while~\cref{prop:noncspoly} does not guarantee polytopality, there is no known example of a nonpolytopal Bier sphere. Therefore, if one proves that a Bier sphere satisfies the assumptions in~\cref{prop:noncspoly} but is not polytopal, it would be the first such example.
    
\section{All Bier spheres with $12$ vertices are polytopal}\label{sec:all12vert}
It is shown in \cite{TimotijevicZivaljevicJevtic2025} that all Bier spheres with up to $11$ vertices are polytopal. The method relies on their previous result of realizing all Bier spheres of threshold complexes with explicit coordinates \cite{JevticTimotijevicZivaljevic2021}. Given a non-threshold complex $K$, one can ``approximate" $\Bier(K)$ with $\Bier(L)$, where $L$ is a maximal threshold subcomplex of $K$, through a sequence of bistellar flips. Along each flip from $\Bier(L)$ to $\Bier(K)$, the algorithm attempts to find convex realizations by radially varying the coordinates of the vertices involved in the flip. However, the method becomes insufficient as the number of vertices of the Bier spheres grows past $11$, since there are significantly more non-threshold complexes than threshold ones. The hope was then to find nonpolytopal Bier spheres with $12$ vertices. The same group of authors studied a plausible candidate $\Bier (\mathbb{I}_6)$, where $\mathbb{I}_6$ is the unique $6$-vertex triangulation of the real projective plane, but found it to be polytopal as well \cite{JevticTimotijevicZivaljevic2025}.

In this section, we establish the polytopality of all Bier spheres with $12$ vertices, which have dimensions $4$ through $10$. In addition, we obtain rational realizations for all of these Bier spheres. While the approach from \cite{TimotijevicZivaljevicJevtic2025} rescales vertices along fixed rays, our algorithm deforms the star-shaped realization of each Bier sphere \cite{JevticTimotijevicZivaljevic2021} by letting its vertices move freely as we optimize a smooth function.

\subsection*{Tool and computational resource disclosure, and independent verification}\label{s:ai}
The realization algorithm and its implementation were obtained through iterative interactions with Claude Opus 4.7 and Claude Opus 4.8 (Anthropic), including the choice of the objective function, the star-shaped initialization, the sequence of $\beta$ values used during optimization, and the rounding of the realizations to rational coordinates. We then provided Claude's code to GPT-5.6 Sol (OpenAI), which implemented the same algorithm (with adjustments to the objective function and related parameters) for the sampled higher-dimensional spheres satisfying \cref{prop:noncspoly}. The AI-generated code can be found in the \nolinkurl{ai_generated_code} folder in~\cite{ourcode}.  The key steps of the algorithm are described in Sections \ref{subsec:objective} and \ref{subsec:star-shaped}. We note that the approach of finding a convex realization using \cref{prop:objective} is well established in the literature, see for example \cite{BS1989, BjornerLasVergnasSturmfelsWhiteZiegler1999, firsching2017realizability, TimotijevicZivaljevicJevtic2025}. The AI's contribution lies in the subsequent steps of the algorithm, starting with the formulation of the objective function $L_\beta$ from the feasibility conditions on the matrix minors given by \cref{prop:objective}.

We have verified all outputs independently using exact arithmetic in SageMath 10.3, see the \nolinkurl{polytopal_bier_verification} folder in \cite{ourcode}. The verification steps are specifically as follows.
\begin{itemize}
    \item For each sphere $S$ and the corresponding vertex set $V(S)$ of its realization listed in the JSON files, our code verifies that the facet list of $S$ equals that of $\conv(V(S))$ after relabeling. 
    \item For a simplicial complex $\Delta$ on $[n]$ with ghost vertices $n - h + 1, \dots ,n$ (where the set of ghost vertices is empty when $h = 0$), the induced subcomplex of $\Bier(\Delta)$ on $\{1, \dots, n-h\}$ together with the ghost vertices is exactly $\Delta$. Therefore, for each sphere $S$ in the JSON files, we take the induced subcomplex $\Delta$ as described, and verify that $S = \Bier (\Delta)$. 
    \item Specifically, for each higher-dimensional sphere $S$ analogous to those constructed in \cref{ex:n=6noncs}, our code checks that the defining complex $\Delta$ such that $\Bier(\Delta) = S$ is generated by:
    \begin{itemize}
        \item the $n/2$-faces given by $r, \ell, C, D$ satisfying the conditions of \cref{prop:noncspoly};
        \item exactly one set from each pair $(A, [n]\setminus A)$ for $|A| = n/2$;
        \item all sets $A \subseteq [n]$ such that $|A| = n/2-1$ (this ensures that $S$ is cs-neighborly).
    \end{itemize}
    \item Our code also verifies that the computation covers every Bier sphere with $12$ vertices, and that the sampled higher-dimensional examples of \cref{prop:noncspoly} represent distinct combinatorial types:
    \begin{itemize}
    \item By either computer enumeration or a standard application of Burnside's lemma~\cite{B1955} (which is not due to Burnside~\cite{N1979}) from group theory, one can check that there are $8072$ distinct simplicial complexes on $[6]$ without ghost vertices up to $S_6$-symmetry and Alexander duality. Thus our list of $12$-vertex Bier $4$-spheres is exhaustive at the level of defining complexes (though distinct complexes may give isomorphic Bier spheres). Our code recovers $8072$ nonisomorphic defining complexes.
    
    \item Our result also includes $(4+i)$-dimensional Bier spheres arising from complexes on $[6 + i]$ with $2i$ ghost vertices for $1 \le i \le 6$. The numbers of such complexes for $1 \le i \le 6$ are $180, 20, 5, 2, 1, 1$ respectively, up to $S_{6+i}$-symmetry. This can be obtained by a standard counting argument, and our code recovers $209$ nonisomorphic defining complexes.

    \item Finally, for the higher-dimensional examples of \cref{prop:noncspoly}, the verification code checks that the defining complexes are pairwise nonisomorphic. Since our sampled spheres are all cs-neighborly, the nonedges determine the antipodal pairs of vertices. Therefore, the nonisomorphic defining complexes force their Bier spheres to be pairwise nonisomorphic as well. Thus the number $2098$ counts distinct combinatorial types, rather than different realizations of the same sphere.
\end{itemize}
    
\end{itemize}

\subsection{Objective function}\label{subsec:objective}
We begin with an observation that follows directly from the definitions of a polytope, its faces, and supporting hyperplanes. 
\begin{proposition}\label{prop:objective}
    Let $S$ be a $(d-1)$-sphere on $m$ vertices and let $\cF(S)$ denote its set of facets. Let $F = \{i_1 < \cdots < i_d\} \subseteq [m]$ and $j \in [m] \setminus F$. Moreover, let $V = \{v_1, \dots, v_m\} \subseteq \R^d$. Define
\[
D_V(F, j) := \det \begin{bmatrix}
    1 &\cdots &1 &1\\
    v_{i_1} &\cdots &v_{i_d} &v_j
\end{bmatrix}.
\]
Then the points in $V$ realize $S$ as the boundary of $\conv(V)$ if and only if 
\begin{enumerate}
    \item for $F \in \cF(S)$, the values $\{D_V(F, j): j \notin F\}$ are all nonzero and of the same sign. Equivalently, $D_V(F, j) \cdot D_V(F, j') > 0$ for all $F \in \cF(S)$ and distinct $j, j' \notin F$. 
    \item for every $F \in {[m] \choose d} \setminus \cF(S)$, the values $\{D_V(F, j): j \notin F\}$ are not of the same sign when they are nonzero (if the points in $V$ are further assumed to be in general position, then the values are always nonzero). 
\end{enumerate}
\end{proposition}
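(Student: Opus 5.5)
The plan is to translate both conditions into statements about hyperplanes through $d$ of the points, using the standard fact that $D_V(F,j)$ is, up to a nonzero factor fixed by $F$, the value at $v_j$ of an affine functional vanishing on the hyperplane spanned by $\{v_i : i\in F\}$. Concretely, when $v_{i_1},\dots,v_{i_d}$ are affinely independent, expanding the determinant along its last column gives $D_V(F,j)=\ba_F^\top v_j - b_F$ for some $\ba_F\neq \mathbf 0$ and some $b_F$. Here $H_F=\{\bx : \ba_F^\top\bx=b_F\}$ is the affine hull of $\{v_i: i\in F\}$. If the points of $F$ are affinely dependent, then $D_V(F,j)=0$ for every $j$. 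Consequently, ``all $D_V(F,j)$, $j\notin F$, nonzero of one sign'' is equivalent to two things: the points of $F$ span a hyperplane $H_F$, and every other point of $V$ lies strictly on one side of $H_F$.

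For the forward direction, suppose $\partial\conv(V)$ realizes $S$ with the $v_i$ as vertices. Each facet $F\in\cF(S)$ corresponds to a $(d-1)$-simplex $\conv\{v_i:i\in F\}$ that is a facet of the $d$-polytope. Its supporting hyperplane is therefore $H_F$, and the polytope lies on one side of it. Any $v_j$ with $j\notin F$ that lay on $H_F$ would lie in the facet $P\cap H_F$. That facet is a simplex whose vertices are exactly $\{v_i: i\in F\}$, and $v_j$ is a distinct vertex of $P$, so this is impossible. This proves (1). For (2), suppose $F\notin\cF(S)$ and the nonzero values $D_V(F,j)$ all had one sign. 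Then $H_F$ would be a supporting hyperplane with $P\cap H_F$ equal to the convex hull of the $v_i$ with $i\in F$, together with any $v_j$ having $D_V(F,j)=0$. This face is $(d-1)$-dimensional, so it is a facet, and being a simplex it has exactly $d$ vertices. Hence it equals $\conv\{v_i:i\in F\}$, so $F$ is a facet of $S$, a contradiction.

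For the converse, assume (1) and (2). Each $F\in\cF(S)$ gives a hyperplane $H_F$ supporting $P=\conv(V)$, with $P\cap H_F=\conv\{v_i:i\in F\}$ a $(d-1)$-simplex. So every facet of $S$ is a facet of $P$, and in particular $P$ is $d$-dimensional. Every point $v_j$ is a vertex of $P$, because it lies in some facet of $S$ and is a vertex of that simplex face. The main obstacle is showing that $P$ has no further facets. Condition (2) only excludes further facets that are simplices with exactly $d$ vertices, so a direct argument is awkward. I would instead argue topologically. The facets from $\cF(S)$ form a subcomplex of $\partial P$ homeomorphic to $S^{d-1}$. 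A closed $(d-1)$-manifold contained in the sphere $\partial P$ must be all of $\partial P$ by invariance of domain. Thus the facets of $P$ are exactly $\cF(S)$, and condition (2) is then needed only to justify the ``only if'' direction of the equivalence stated in the proposition. I would still check that (2) is consistent with this conclusion, which it clearly is.
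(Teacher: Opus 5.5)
Your proof is correct and matches the paper's approach. The paper calls the statement immediate from supporting hyperplanes, which your forward direction spells out, and for the ``if'' direction it likewise notes that (1) alone makes $S$ a subcomplex of $\partial\conv(V)$, so the two equal-dimensional spheres coincide; your invariance-of-domain argument supplies the justification. One loose end: for a non-facet $F$ whose points are affinely dependent (e.g.\ $\{\pm e_1,\pm e_2\}$ in the $4$-dimensional cross-polytope), every $D_V(F,j)=0$ and $H_F$ does not exist, so your argument for (2) does not apply and condition (2) must be read as vacuous there, as the paper's general-position parenthetical suggests.
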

Note that condition (2) is redundant for the ``if" direction. Indeed, if condition (1) holds, then $S$ is a subcomplex of $\partial \conv(V)$, which then implies $S = \partial \conv(V)$ since they are simplicial spheres of the same dimension. However, during optimization, condition (1) holds only approximately until the very end, so condition (2) is imposed to nudge the solutions toward the right direction. 

We set up a smooth objective function using the \emph{softplus} function $s(x) = \ln (1+e^x)$. For $\beta , \lambda, \tau> 0$ define
\[
L_\beta(V) := \sum_{F \in \cF(S)} \sum_{j, j' \notin F, j < j'} s(-\beta D_V(F, j) D_V(F, j')) + \lambda \sum_{F \notin \cF(S)} s(-\beta/\tau \ln\sum_{j, j' \notin F, j < j'} e^{-\tau D_V(F, j)D_V(F, j')}).
\]
Roughly speaking, minimizing $L_\beta$ produces a candidate realization because the first summand $\Sigma_1$ enforces condition (1), and the second summand $\Sigma_2$ enforces condition (2). First, suppose $V$ satisfies both conditions with points in general position. Then so does $tV$ for any $t \ne 0$. When $V$ gets scaled to $tV$, every determinant $D_V(F, j)$ multiplies by $t^d$. Therefore, as $t \to \infty$, the products $D_{tV}(F, j) D_{tV}(F, j')$ in $\Sigma_1$ tend to $+\infty$, so $\Sigma_1(tV) \to 0$. Meanwhile, for every nonface $F$, at least one product $D_{tV}(F, j) D_{tV}(F, j') < 0$, so $-\beta/\tau \ln\sum_{j, j' \notin F, j < j'} e^{-\tau D_{tV}(F, j)D_{tV}(F, j')} \to -\infty$. This means $\Sigma_2(tV) \to 0$ as well. On the other hand, if $V$ violates condition (1), then $-\beta D_V(F, j) D_V(F, j') \ge 0$ for some $F, j, j'$, so 
\[
\Sigma_1 \ge s(-\beta D_V(F, j) D_V(F, j')) \ge s(0) = \ln2.
\]
If $V$ violates condition (2) for some $F \notin \cF(S)$, then all ${m-d \choose 2}$ pairs of $(D_V(F, j), D_V(F, j'))$ have the same sign or contain at least one zero, so $D_V(F, j) D_V(F, j') \ge 0$. This means 
\[
\Sigma_2 \ge \lambda s\left(-\beta/\tau \ln \left({m-d \choose 2} e^0\right)\right).
\]
Therefore, when $L_\beta(V)$ is sufficiently small (for example, $L_\beta(V)<\min \left\{\ln 2, \lambda s\left(-\beta/\tau\ln {m-d \choose 2}\right)\right\}$), $V$ must satisfy both conditions.

The numerical errors will later be resolved with rational rounding and checking with exact arithmetic. The constant $\lambda$ is used to balance the weights of the two summands and is set to $1$ throughout, and $\tau$ determines the sharpness of $-1/\tau \ln\sum_{j, j' \notin F, j<j'} e^{-\tau D_V(F, j)D_V(F, j')}$ and is set to $20$ throughout. On the other hand, $\beta$ controls the sharpness of the entire function and is varied along the sequence $(2, 20, 200)$ and then $(1,3,10,30,100,300)$ when the optimization stalls. At each stage, the objective function is minimized using SciPy’s implementation \cite{Virtanen2020} of the L-BFGS-B algorithm \cite{Zhu1997}, with gradients computed by automatic differentiation in JAX \cite{jax2018github}.

\subsection{Star-shaped initialization}\label{subsec:star-shaped}Initializing the optimization from a random point frequently causes it to fail. The algorithm therefore uses the star-shaped realization constructed in \cite[Section 3]{JevticTimotijevicZivaljevic2021} as the initial point for the minimization. Let $\delta_i := e_i - u/n$, where $u = e_1 + \cdots + e_n$. Consider the orthogonal projection of the vertices $\{\pm e_i: 1 \le i \le n\}$ of the $n$-dimensional crosspolytope onto $H_0 := \{x \in \R^n: \langle u, x\rangle = 0\}$. The resulting vertices are $\{\pm \delta_i: 1 \le i \le n\}$. As shown in \cite[Theorem 3.1]{JevticTimotijevicZivaljevic2021}, for any $(n-2)$-dimensional Bier sphere without ghost vertices, $\{\pm \delta_i: 1 \le i \le n\}$ are exactly the rays of the fan of the star-shaped realization of the Bier sphere. The combinatorial type of an individual sphere is reflected only in which rays span the maximal cones of the fan. Therefore, for our Bier $4$-spheres on $12$ vertices, we initialize at the same coordinates $\begin{bmatrix}
    A &-A
\end{bmatrix}$, where
\begin{align*}
A = &\begin{bmatrix}
\frac{1}{\sqrt2} & -\frac{1}{\sqrt2} & 0 & 0 & 0 & 0 \\[2pt]
\frac{1}{\sqrt6} & \frac{1}{\sqrt6} & -\frac{2}{\sqrt6} & 0 & 0 & 0 \\[2pt]
\frac{1}{\sqrt{12}} & \frac{1}{\sqrt{12}} & \frac{1}{\sqrt{12}} & -\frac{3}{\sqrt{12}} & 0 & 0 \\[2pt]
\frac{1}{\sqrt{20}} & \frac{1}{\sqrt{20}} & \frac{1}{\sqrt{20}} & \frac{1}{\sqrt{20}} & -\frac{4}{\sqrt{20}} & 0 \\[2pt]
\frac{1}{\sqrt{30}} & \frac{1}{\sqrt{30}} & \frac{1}{\sqrt{30}} & \frac{1}{\sqrt{30}} & \frac{1}{\sqrt{30}} & -\frac{5}{\sqrt{30}}
\end{bmatrix}.
\end{align*}
The construction for the higher-dimensional Bier spheres with ghost vertices is similar. Here not every vertex is paired off; a ghost vertex has no antipode, so its ray may be assigned either sign. The seed is then chosen by a search over all sign patterns.  

Once the algorithm obtains a floating point solution, the coordinates are rounded to the nearest rational with a common denominator $D$, where $D$ is the smallest integer in $\{10^2, 3\cdot 10^2, 10^3, 3\cdot 10^3, 10^4, \cdots\}$ such that the rounded rational coordinates pass the exact verification. This verification step is part of the AI-generated implementation. We emphasize that our verification code is entirely independent from it.

\section{Future directions}\label{s:futurework}
Fix $d \ge 4$. Let $s(d, n)$ denote the number of combinatorially distinct pairs $(S, \iota)$ where $S$ is a cs-polytopal $(d-1)$-sphere on $2n$ vertices with antipodal map $\iota$ such that $\iota$ is realized geometrically by some cs convex realization of $S$. Let $\bar{s}(d, n)$ denote the number of combinatorially distinct pairs $(S, \iota)$ where $S$ is a polytopal cs $(d-1)$-sphere on $2n$ vertices with antipodal map $\iota$ that cannot be geometrically realized. We ask whether almost every polytopal cs sphere is not cs-polytopal. More precisely,
\begin{question}
    Is the limit $\lim_{n \to \infty} \frac{s(d,n)}{s(d,n)+\bar{s}(d,n)}$ equal to $0$?
\end{question}

We have seen that Bier spheres give rise to new explicit constructions of simplicial polytopes with noncontractible realization space. It is reasonable to ask whether this nice class of spheres can provide more insights into the universality problem for the realization spaces of polytopes. 
It is known from the results in~\cite{AP2017} (see also~\cite{M1988} and~\cite[Theorem 6.10(ii)]{BS1989}) that a version of the universality theorem holds for arbitrary simplicial polytopes. Our results from~\cref{s:csnonpolytopal} lead us to the following question about the existence of a universality statement for Bier spheres. 
\begin{question}
    Let $V \subset \R^t$ be an open primary basic semialgebraic set defined over $\Z$. Is there a simplicial complex $\Delta$ such that $\Bier(\Delta)$ is a polytopal sphere and $\cR(\Bier(\Delta))$ is stably equivalent to $V$? In particular, given a simplicial complex $\Gamma$, is there a simplicial complex $\Delta$ such that $\cR(\Bier(\Delta))$ has the same homotopy type as $\Gamma$?
\end{question}

It is shown in \cite[Section 6]{bjorner2005bier} that there are at least $\frac{2^{2^n/\sqrt{n}}}{(2n/e)^{2n}}$ combinatorially nonisomorphic $(n-2)$-dimensional Bier spheres on $2n$ vertices. In contrast, there are only $2^{8n^3+O(n^2)}$ combinatorially distinct simplicial polytopes with $2n$ vertices \cite{Goodman, Alon}. This implies that most Bier spheres are not polytopal. However, to this day there is no explicit example of a nonpolytopal Bier sphere, hence the open problem below. As we have just established that all Bier spheres with at most $12$ vertices are polytopal, the search must now start with at least $13$ vertices. 
\begin{problem}\label{prob:nonpolybier}
    Find a nonpolytopal Bier sphere.
\end{problem}

In general, deciding the polytopality of a simplicial sphere is a very hard problem. In fact, already for $3$-spheres this problem is NP-hard \cite{RichterGebertZiegler1995Universal4Polytopes}. The development of different techniques to approach this problem is outlined in \cite[Section 6.4]{RichterGebertZiegler2017OrientedMatroids}. The same reference provides the background for the following discussion. The vertex set of a simplicial polytope defines a uniform oriented matroid, which is also called a \emph{uniform matroid polytope}. The face lattice of the uniform matroid polytope in turn gives rise to a simplicial sphere. The strict containments of the following hierarchy have been proved: \{simplicial polytopes\} $\subsetneq$ \{uniform matroid polytopes\} $\subsetneq$ \{simplicial spheres\}. We have attempted Problem \ref{prob:nonpolybier} by searching for a Bier sphere that is not even realizable by an oriented matroid, but have failed to obtain such an example so far. This leads to the question below. 
\begin{question}
    Are all Bier spheres realizable as oriented matroids?
\end{question}
If the answer is affirmative, then we also obtain an asymptotic lower bound for uniform matroid polytopes of rank $n$ on $2n$ elements, which, to the best of our knowledge, has not been previously studied. Recall that if $F(N,r)$ denotes the number of uniform oriented matroids of rank $r$ on $N$ elements, then for fixed $r$ and $N \geq r \geq 3$ one has
\[
  2^{c_r N^{r-1}} < F(N,r) < 2^{d_r N^{r-1}},
\]
where $c_r,d_r>0$ are constants depending on $r$ \cite[Corollary~7.4.3]{BjornerLasVergnasSturmfelsWhiteZiegler1999}.  This estimate concerns uniform oriented matroids in general, and does not specialize to matroid polytopes. Moreover, in our situation the relevant rank is not fixed but grows with $N$, so these bounds do not immediately say much about the case $r=n$, $N=2n$.

\subsection*{Acknowledgements}
We are grateful to Isabella Novik for helpful feedback and to Martin Winter for insightful conversations. We also thank Julian Pfeifle and Amy Wiebe for discussions on nonrealizability certificates and coding support. Yirong Yang's research was partially supported by NSF MSPRF grant DMS-2602353. 

\bibliographystyle{amsalpha} 
\bibliography{paper_ref}
\end{document}